\numberwithin{equation}{section}
\newtheorem{proposition}{Proposition}[section]
\newtheorem{lemma}[proposition]{Lemma}
\newtheorem{theorem}[proposition]{Theorem}
\newtheorem{corollary}[proposition]{Corollary}
\theoremstyle{definition}
\newtheorem{remark}[proposition]{Remark}
\newtheorem{definition}[proposition]{Definition}
\newtheorem{example}[proposition]{Example}
\renewcommand\varphi{\mathfrak{f}}
\renewcommand{\epsilon}{\varepsilon}
\newcommand{\mff}{\mathfrak{f}}
\newcommand{\mfg}{\mathfrak{g}}
\newcommand\PSH{\mathrm{PSH}}
\newcommand\MA{\mathrm{MA}}
\newcommand\FS{\mathrm{FS}}
\newcommand\mi{^{-1}}
\newcommand\vol{\mathrm{vol}}
\newcommand\bsni{\bigskip\noindent}
\newcommand\bbc{\mathbb{C}}
\newcommand\bbr{\mathbb{R}}
\newcommand\cA{\mathcal{A}}
\newcommand\cC{\mathcal{C}}
\newcommand\cE{\mathcal{E}}
\newcommand\cF{\mathcal{F}}
\newcommand\cN{\mathcal{N}}
\newcommand{\norm}[1][\cdot]{\left\|#1\right\|}
\newcommand\NA{^{\mathrm{NA}}}
\newcommand\vbar{{\underline v}}
\newcommand\abar{{\underline a}}
\title[Infinite-dimensional flats in the space of Kähler metrics]{Infinite-dimensional flats in the space of positive metrics on an ample line bundle}
\author[R\'emi Reboulet]{R\'emi Reboulet}
\author[David Witt Nyström]{David Witt Nyström}
\address{R\'emi Reboulet, CNRS, Université Claude Bernard Lyon 1, Bâtiment Doyen Jean Braconnier, 69100 Villeurbanne, France}\email{reboulet@math.univ-lyon1.fr}
\address{David Witt Nyström, Department of Mathematical Sciences, Chalmers University of Technology, Chalmers tvärgata 3, Göteborg 412 96, Sweden}\email{wittnyst@chalmers.se}
\begin{document}

\begin{abstract}
We show that any continuous positive metric on an ample line bundle $L$ lies at the apex of many infinite-dimensional Mabuchi-flat cones. More precisely, given any bounded graded filtration $\cF$ of the section ring of $L$, the set of bounded decreasing convex functions on the support of the Duistermaat--Heckman measure of $\cF$ embeds $L^p$-\textit{isometrically} into the space of bounded positive metrics on $L$ with respect to Darvas' $d_p$ distance for all $p\in[1,\infty)$, and in particular with respect to the Mabuchi metric ($p=2$).
\end{abstract}


\maketitle

\tableofcontents

\section*{Introduction.}

We consider the space of bounded positive metrics on an ample line bundle $L$ over a projective manifold $X$. Mabuchi \cite{mabuchi} defined a distance between two \textit{smooth} positive metrics $\phi_0$, $\phi_1$ on $L$, with
$$d_2(\phi_0,\phi_1)^2:=\inf_{\{\gamma_t\}}\int_0^1\int_X |\dot\gamma_t|^2 \MA(\gamma_t)\,dt,$$
where the infimum is taken over smooth paths $\{\gamma_t\}$ joining $\phi_0$ and $\phi_1$. This distance then extends by continuity to bounded positive metrics. 

By work of Chen \cite{chen} and Darvas \cite{darmabuchi}, this distance in fact makes it into a \textit{geodesic} metric space, which is by itself remarkable: indeed, the Mabuchi distance is highly nonlinear, and geodesics are given as solutions to a degenerate complex Monge--Ampère equation \cite{semmes}. 

Darvas \cite{darmabuchi} later constructed the metric completion $\cE^2(X,L)$ of the space of bounded positive metrics, with respect to the Mabuchi distance, whose elements are singular metrics on $L$ satisfying some integrability condition. More generally, he constructed Finsler $L^p$-type distances $d_p$ for each $p\in [1,\infty)$ on the space of bounded positive metrics on $L$, with completions $\cE^p(X,L)$, which are all included in the larger $\cE^1(X,L)$, called the space of \textit{finite energy} (positive) metrics.

Endowed with the Mabuchi distance, the sets $\cE^p(X,L)$ furthermore have \textit{negative curvature}, in the sense of Busemann, i.e.\ the distance function between two geodesics is convex \cite{chencheng}. In this paper, we will be interested in finding \textit{flat} subspaces of $\cE^p(X,L)$, i.e.\ isometric embeddings of (subsets of) a real vector space into it. Geodesics are of course examples of \textit{one-dimensional flats}, but in this paper, we show that this space in fact admits \textit{infinite-dimensional} flats. 

To construct such subspaces, we will use the method of \textit{quantisation}. Recall that the space of continuous positive metrics is closely related to the spaces of Hermitian norms on $H^0(X,kL)$. Indeed, fixing a smooth volume form $dV$ on $X$, a continuous positive metric $\phi$ induces a Hermitian norm $\norm_{k\phi}$ on the space of sections $H^0(X,kL)$ by
$$\norm[s]_{k\phi}^2:=\int_X |s|^2_{k\phi}\,dV.$$
Conversely, a Hermitian norm induces a metric \textit{via} the Fubini--Study map: letting $(s_{k,i})_i$ be an orthonormal basis for $\norm_{k\phi}$, one defines the smooth positive metric
$$\phi_k:=k\mi \log \sum_i |s_{k,i}|^2.$$
It is well-known from results on quantisation \cite{bouche,catlin,tian,zelditch} that, at the limit, these metrics recover $\phi$:
$$\lim_{k\to\infty}\phi_k=\phi.$$
Chen--Sun \cite{chensun} indeed show that the space of positive metrics is a Gromov--Hausdorff limit of these spaces of Hermitian norms.

Such spaces of norms admit many flat subspaces: fixing a basis of $H^0(X,kL)$, the subclass of Hermitian norms admitting it as an orthogonal basis is in fact isometric to $\bbr^{\dim H^0(X,kL)}$. Those subspaces are called \textit{apartments} and fit more generally into the structure of an \textit{Euclidean building}. It has been conjectured for a long time, see Donaldson \cite[Section 6(iii)]{donaldson} and Codogni \cite[Section 7]{codogni}, that these structures should reflect to an \textit{asymptotic} building structure on the space of Kähler metrics. Our result is in this direction.

We now consider a bounded graded filtration $\cF$ of the section ring of $L$ (in the sense of \cite{bchen}), which means that on each graded piece $H^0(X,kL)$, we have an induced decreasing, left-continuous filtration $\cF^\lambda H^0(X,kL)$ which satisfies a \textit{submultiplicativity} property, in that $$\cF^\lambda H^0(X,kL)\cdot\cF^\gamma H^0(X,mL)\subseteq \cF^{\lambda+\gamma}H^0(X,(k+m)L).$$
To such a filtration are associated its \textit{jumping numbers} $\lambda_{i,k}$, which are the (ordered) values of $\lambda$ at which the filtration changes dimension. Such filtrations were shown by the second author \cite{dwnfiltr} to arise from \textit{test configurations} of $(X,L)$, i.e.\ $\bbc^*$-equivariant one-parameter degenerations of $(X,L)$, although they are strictly more general than test configurations \cite{sze}.

Furthermore, to a filtration $\cF$ one can associate its \textit{Duistermaat--Heckman measure} $\sigma(\cF)$ \cite{bchen}, which is given as the weak limit of the measures $$\sigma_k(\cF):=k^{-n}\sum_i\delta_{\lambda_{i,k}/k}.$$

Let us now pick a convex, decreasing function $\mff$ on $\bbr$. Note that one can always choose an orthogonal basis $(s_{k,i})$ for $\norm_{k\phi}$ which is \textit{adapted} to $\cF$, in that $\cF^{\lambda_i}H^0(X,kL)$ is spanned by $\{s_{k,j}: j\geq i\}$. We then define a \textit{modified} Fubini--Study metric:
$$\phi_k^\mff:=k\mi\log \sum_i |s_{k,i}|^2e^{-k\cdot \mff(\lambda_{k,i}/k)}.$$
We show that the limit
$$\phi^\mff:=\mathrm{usc}\,\lim_{k\to\infty}\phi_k^\mff$$
exists and converges to a bounded positive metric on $L$.

Now, any bounded, convex, decreasing function $\mff$ on the support of the Duistermaat--Heckman measure of $\cF$ may be uniformly approximated by convex, decreasing functions defined on all of $\bbr$, by an extension procedure (see Section \ref{subsect_dh}). We can now extend to this more general case using completeness and an approximation argument. Our main result is then the following:

\bsni\textbf{Main Theorem (\ref{thm_flat1}).} Let $\phi$ be a continuous positive metric on $L$, and let $\cF$ be a bounded graded filtration on $L$. Let $\cC(\cF)$ be the cone of bounded convex decreasing functions on the support of $\sigma(\cF)$. Then, there is an embedding
$$\iota_{\phi,\cF}:\cC(\cF)\hookrightarrow L^\infty\cap\PSH(X,L)$$
mapping $0$ to $\phi$, and such that for all $p\in [1,\infty)$ and any two $\mff,\mfg\in \cC(\cF)$,
    $$d_p(\phi^\mff,\phi^\mfg)=\|f-g\|_{L^p(\sigma(\cF))}.$$

\bsni\textit{Remark 1.} The second statement implies that the map $\iota_{\phi,\cF}$ is an \textit{isometric embedding} of the (infinite-dimensional) cone $\cC(\cF)$ equipped with the $L^p_{\sigma(\cF)}$-metric into the cone of bounded positive metrics on $L$ equipped with the Mabuchi-type Darvas metrics. Therefore, for \textit{any} bounded graded filtration $\cF$, there exists an associated \textit{infinite-dimensional flat cone} with apex $\phi$.

\bsni\textit{Remark 2.} Phong--Sturm \cite{phongsturm} (in the case of test configurations) and Ross--Witt Nyström \cite{rwn} (in general) constructed a geodesic ray, i.e. a \textit{one-dimensional flat cone}, associated to the data of a bounded graded filtration on the section ring of $L$. One recovers those results by taking the family of functions $\mff_t:x\mapsto -t\cdot x$, so that $\{\phi^{\mff_t}\}$ coincides with that geodesic ray. In fact, our construction can be seen as first \textit{modifying} the filtration $\cF$ by the data of the function $\mff$, in a way that is \textit{flat} in a certain sense of the metric geometry of filtrations, and then taking $t=1$ in this geodesic ray construction.

\bsni\textbf{Ideas for the proof.} Let us fix a continuous positive metric $\phi$ on $L$ and a \textit{bounded graded filtration} $\cF$ of the section ring of $L$. While the language of filtrations is likely more familiar to the reader, we prefer to consider instead \textit{bounded graded non-Archimedean norms} $\norm_\bullet\NA$, which are in one-to-one correspondence with filtrations: indeed, on each piece $H^0(X,kL)$ of the section ring, a filtration $\cF^\lambda_k$ induces a non-Archimedean vector space norm by setting
$$\|v\|_k\NA=\sup\{\lambda\in \bbr,\,v\in \cF^\lambda_k H^0(X,kL)\};$$
and this construction can be reversed. Due to the fact that we also work with norms in quantisation, it will make proofs and parallels more clear.

We then consider the class of decreasing convex functions $\mff$ on $\bbr$. The convexity property is an analogue of \textit{submultiplicativity}, and the decreasing property is important to preserve the order of the jumping numbers of the filtration, which become values of the logarithm of the non-Archimedean norm at an orthogonal basis. Associated to it, we construct another bounded graded non-Archimedean norm on the section ring of $L$. The crucial aspect of this article is that the construction associating a new norm to such a function $\mff$ can in fact be made in a \textit{flat} manner, in a certain sense of metric geometry of graded norms, with respect to the $L^p$ norm of $\mff$ restricted to the support of the Duistermaat--Heckman measure of $\cF$. (We note that we technically work with just bounded, decreasing convex functions with bounded right-derivative on this support, which is equivalent, but less convenient for this exposition.) The flatness on the non-Archimedean side, which can be seen as the \textit{boundary at infinity} of the space of positive metrics on $L$, in turn reflects on the space of positive metrics itself, \textit{via} Phong--Sturm's construction. We are greatly helped by recent results of Finski \cite{fin1,fin2} that give algebraic conditions allowing us to understand the metric geometry of positive metrics \textit{via} the metric geometry of norms on the section ring of $L$, which appear e.g.\ in quantisation of geodesic rays. We then extend to general bounded decreasing convex functions on the support of the Duistermaat--Heckman measure by a completeness approximation argument.

\bsni\textbf{Structure of the paper.} The first section gathers results from the literature concerning spaces of norms and quantisation of positive metrics. It is essentially expository.

The second section recalls notions on non-Archimedean norms and Duistermaat--Heckman measures. We explain how to modify a filtration (equivalently a non-Archimedean norm) using the data of a convex decreasing function on the support of the Duistermaat--Heckman measure, then prove our construction yields a new bounded graded norm (Theorem \ref{thm_submult}).

The third section is devoted to the proof of our main theorem. We first study how to modify complex norms, and how this operation preserves submultiplicativity. Finally, we prove the main result (Theorem \ref{thm_flat1}). In the last subsection, we discuss the results and list some open questions, in particular related to Okounkov bodies.

\bsni\textbf{Acknowledgements.} The authors warmly thank Siarhei Finski for many enlightening discussions and comments on a draft version of this article. He in particular suggested that our results should work in a more general setting than the one we initially considered, described in the last section on open questions. They kindly thank the anonymous referee for helpful remarks and comments.

\section{Quantisation of positive metrics.}

Let $X$ be a compact projective complex manifold, endowed with an ample line bundle $L$. The principle of quantisation allows one to construct positive Hermitian \textit{metrics} on $L$ coming from sequences of \textit{norms} on the spaces of sections $H^0(X,kL)$. The ideal type of sequence of norms is a \textit{submultiplicative} sequence of \textit{Hermitian} norms, as submultiplicative sequences have good asymptotic behaviour, while Hermitian norms are easier to analyse, notably because they always admit orthogonal bases.

Unfortunately, in Kähler geometry, most sequences used in quantisation fail to be both submultiplicative and Hermitian. An important class of examples is given by taking $L^2$ or $L^\infty$ norms associated to continuous metrics on the line bundle, as we will explain below. The problem of relating submultiplicativity and Hermitian-ness is at the heart of various problems, including the construction of Mabuchi geodesics \cite{phongsturm,berndtssonprobability,fin1}. Indeed, while one can obtain partial submultiplicativity for sequences of Hermitian norms (coming from the \textit{Bernstein--Markov property} \cite{bb,dwntransforming}), this is not in general sufficient to yield convergence of the quantisation process, and is harder to control under perturbations. Work of Berman--Boucksom \cite{bb} and recently Finski \cite{fin1} provides a way to bridge the gap between those two notions.

\subsection{The space of norms on a complex vector space.}

We begin by recollecting facts about spaces of norms. Throughout this section, $V$ will be a finite-dimensional vector space of dimension $N$ over $\bbc$. We will denote by $\cN(V)$ the space of norms on $V$, and by $\cN^H(V)\subset \cN(V)$ the space of Hermitian norms on $V$.

Those spaces can be endowed with quasi-metric structures. To that end, we consider two norms $\norm$, $\norm'\in\cN(V)$, and define their \textit{successive minima}:
\begin{equation}
\lambda_i(\norm,\norm'):=\sup_{W\in\mathrm{Grass}_i(V)}\inf_{w\in W^\times}\log\frac{\norm[w]'}{\norm[w]},
\end{equation}
$\mathrm{Grass}_i(V)$ denoting the Grassmannian of $i$-dimensional subspaces of $V$. If both norms admit a common orthogonal basis $\underline v=(v_i)$, which is always the case if for example both norms are Hermitian, then up to reordering one has
\begin{equation}
\lambda_i(\norm,\norm')=\log\frac{\norm[v_i]'}{\norm[v_i]},
\end{equation}
corresponding to the logarithms of the eigenvalues of the transition matrix between the two norms. For $p\in [1,\infty)$, one then defines
$$d_p(\norm,\norm')^p:=N\mi \sum_i |\lambda_i(\norm,\norm')|^p,$$
and
$$d_\infty(\norm,\norm'):=\max_i |\lambda_i(\norm,\norm')|.$$
It is then clear that, for $p<q\in[1,\infty]$,
\begin{equation}
d_1(\norm,\norm')\leq d_p(\norm,\norm')\leq d_q(\norm,\norm')\leq d_\infty(\norm,\norm').
\end{equation}

The following fact is well-known for the Hermitian case (e.g. \cite[Theorem 3.1]{be}), and is explained in \cite[Eq (3.9)]{fin2} in the non-Hermitian case:
\begin{proposition}For $p\in[1,\infty]$, $d_p$ defines a metric on $\cN^H(V)$. It satisfies a quasi-triangle inequality on $\cN(V)$ in general: for $\norm$, $\norm'$, $\norm''$,
$$d_p(\norm,\norm')\leq d_p(\norm,\norm'')+d_p(\norm'',\norm')+\log N.$$
\end{proposition}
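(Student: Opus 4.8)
The plan is to handle the two assertions separately: the genuine metric axioms on $\cN^H(V)$, and then the quasi-triangle inequality on all of $\cN(V)$ by reduction to the Hermitian case via John's ellipsoid theorem. I would start by unwinding the minimax formula for the successive minima. On a line the inner infimum is attained identically, so $\lambda_1(\norm,\norm')=\log\sup_{w}\bigl(\norm[w]'/\norm[w]\bigr)$ and $\lambda_N(\norm,\norm')=\log\inf_{w}\bigl(\norm[w]'/\norm[w]\bigr)$; since $\lambda_1\ge\cdots\ge\lambda_N$ this gives, for arbitrary norms,
\[
d_\infty(\norm,\norm')=\max\Bigl(\log\sup_{w}\tfrac{\norm[w]'}{\norm[w]},\ \log\sup_{w}\tfrac{\norm[w]}{\norm[w]'}\Bigr),
\]
which is visibly symmetric, vanishes exactly when $\norm=\norm'$, and obeys the \emph{exact} triangle inequality on $\cN(V)$ since the supremum of a product is at most the product of the suprema; so $d_\infty$ is already an honest metric on $\cN(V)$ and the case $p=\infty$ is complete. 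For $p<\infty$ and Hermitian norms, simultaneous diagonalisation of the two Gram matrices $H_0,H_1$ produces a common orthogonal basis $(v_i)$, so the $\lambda_i(\norm,\norm')$ are the eigenvalues of $\tfrac12\log\bigl(H_0^{-1/2}H_1H_0^{-1/2}\bigr)$ listed in decreasing order; symmetry ($\lambda_i(\norm',\norm)=-\lambda_{N+1-i}(\norm,\norm')$, as $A$ and $A^{-1}$ have reciprocal eigenvalues) and non-degeneracy ($d_p=0$ forces $\norm[v_i]'=\norm[v_i]$ for all $i$, hence equality of the Hermitian forms) are then immediate, and only the triangle inequality requires real work.

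That triangle inequality is the heart of the matter. Identifying $\cN^H(V)$ with the space $P(N)$ of positive-definite Hermitian $N\times N$ matrices through Gram matrices, $d_p$ becomes $\tfrac12 N^{-1/p}\bigl\|\log(H_0^{-1/2}H_1H_0^{-1/2})\bigr\|_{S^p}$, a normalised Schatten-$p$ norm of a matrix logarithm. The triangle inequality for this quantity is exactly the statement that it is the length distance of the $\GL_N(\bbc)$-invariant Finsler metric $X\mapsto\|H^{-1/2}XH^{-1/2}\|_{S^p}$ on $P(N)$: the curve $t\mapsto H_0^{1/2}(H_0^{-1/2}H_1H_0^{-1/2})^{t}H_0^{1/2}$ realises the upper bound by a direct length computation, while the matching lower bound is the \emph{exponential metric increasing property} for Schatten norms --- for $p=2$ this is the classical non-positive curvature of $P(N)$, while for general $p$ it rests on the operator-convexity estimates controlling the Daleckii--Krein representation of the differential of $\exp$. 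This is the single substantive input, and I would quote it from \cite[Theorem 3.1]{be} (and, for $p=2$, from Bhatia's monograph on positive definite matrices) rather than reprove it.

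For the passage to arbitrary norms, given $\norm\in\cN(V)$ let $h$ be the Hermitian norm whose unit ball is the maximal-volume inscribed ellipsoid; the complex John theorem gives $\norm[w]\le\norm[w]_{h}\le\sqrt N\,\norm[w]$ for all $w$, so $\lambda_i(\norm,h)\in[0,\tfrac12\log N]$ for every $i$ and hence $d_p(\norm,h)\le\tfrac12\log N$ for all $p$. The point I would exploit is that these errors $\log\bigl(\norm[w]_h/\norm[w]\bigr)$ are \emph{nonnegative}, so substituting $h$ for $\norm$ in either slot of a successive minimum alters it by a quantity of \emph{fixed sign} and size $\le\tfrac12\log N$; substituting the hermitisations in both slots of $\lambda_i(\norm,\norm'')$ therefore makes the two sign-definite errors partly cancel, yielding $|\lambda_i(\norm,\norm'')-\lambda_i(h,h'')|\le\tfrac12\log N$ for each $i$ (with $h,h''$ the hermitisations of $\norm,\norm''$), and hence $|d_p(\norm,\norm'')-d_p(h,h'')|\le\tfrac12\log N$. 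Combining this with the $\cN^H(V)$-triangle inequality for the hermitisations $h,h',h''$ of $\norm,\norm',\norm''$ gives the quasi-triangle inequality with a defect that is a constant depending only on $N$; pinning that constant to $\log N$ is a matter of choosing the approximants economically --- e.g.\ hermitising only the middle norm and using the one-sided bound $d_p(\norm,\norm'')\le d_p(\norm,h')+d_p(h',\norm'')$ valid for Hermitian $h'$, itself a consequence of the submultiplicativity of successive minima under the composition $(V,\norm)\to(V,h')\to(V,\norm'')$ with a Hilbert space interposed --- and is carried out in \cite[Eq.\ (3.9)]{fin2}. In short, everything apart from the Hermitian triangle inequality (the exponential-metric-increasing property of the Schatten-$p$ Finsler metric on $P(N)$, classical for $p=2$ and imported from the literature for general $p$) is a formal manipulation of the minimax formula together with John's ellipsoid theorem; that one analytic input is the only real obstacle.
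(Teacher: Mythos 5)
The paper does not actually prove this proposition: it is imported from the literature, with \cite[Theorem 3.1]{be} cited for the Hermitian case and \cite[Eq.\ (3.9)]{fin2} for the quasi-triangle inequality on $\cN(V)$. Your proposal therefore differs simply by supplying an argument, and most of it is sound. The identity $d_\infty(\norm,\norm')=\max\bigl(\log\sup_w\norm[w]'/\norm[w],\ \log\sup_w\norm[w]/\norm[w]'\bigr)$, the monotonicity $\lambda_1\ge\cdots\ge\lambda_N$, and the resulting exact triangle inequality for $p=\infty$ on all of $\cN(V)$ are correct. Reducing the Hermitian triangle inequality for $p<\infty$ to the exponential-metric-increasing property of the Schatten-$p$ Finsler structure on positive matrices is the standard proof, and quoting it from \cite{be} is exactly what the paper does. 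The hermitisation step is also correctly set up: since the John norm $h$ satisfies $\norm\le h\le\sqrt N\,\norm$, the pointwise correction $\log\bigl(\norm[w]_h/\norm[w]\bigr)$ lies in $[0,\tfrac12\log N]$, so replacing \emph{both} arguments of $\lambda_i$ by their hermitisations perturbs each successive minimum by at most $\tfrac12\log N$, whence $|d_p(\norm,\norm'')-d_p(h,h'')|\le\tfrac12\log N$ (the normalised $\ell^p$-average is $1$-Lipschitz for the sup-norm on the vector of minima).

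The one soft spot is the constant. Running your argument as written (hermitise all three norms, apply the Hermitian triangle inequality, undo the hermitisations) yields a defect of $\tfrac32\log N$, not $\log N$. The ``economical'' fix you sketch --- hermitising only the middle norm and invoking a one-sided inequality $d_p(\norm,\norm')\le d_p(\norm,h'')+d_p(h'',\norm')$ through a Hermitian midpoint --- would indeed bring the defect down to $\log N$, but your justification (``submultiplicativity of successive minima with a Hilbert space interposed'') is not an argument: for $p<\infty$ the passage from pointwise submultiplicativity of ratios to the $\ell^p$-average of the $|\lambda_i|$ is precisely the nontrivial content, and it is really what \cite[Eq.\ (3.9)]{fin2} is being cited for. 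Since you defer that step to the same reference the paper uses, and since in all applications here only the $O(\log N_k)=O(\log k)$ growth of the defect matters (it is divided by $k$ and sent to $0$), this is cosmetic rather than fatal --- but as a standalone proof you should either establish the one-sided inequality or state the proposition with the constant your argument actually delivers.
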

The spaces $(\cN^H(V),d_p)$ are geodesic for $p\in[1,\infty]$ and uniquely geodesic for $p\in (1,\infty)$. The unique geodesic is given by picking a common orthogonal basis $\underline v$ for two norms $\norm,\norm'$, and defining for $t\in[0,1]$:
$$\norm[\sum c_i v_i]_t:=\sum |c_i|\norm[v_i]^{1-t}\norm[v_i]'{}^t.$$

We also define the \textit{relative volume} of two norms by
$$\vol(\norm,\norm'):=N\mi\sum_i \lambda_i(\norm,\norm').$$
This is a quantity that measures the discrepancy between the volumes of the unit balls of two given norms. It is closely related to the $d_1$ distance: indeed, if $\norm[v]\leq\norm'[v]$ for all $v\in V$, then
$$d_1(\norm,\norm')=\vol(\norm,\norm').$$
One can show that this is a particular case of a more general formula using the \textit{max operator} in the Hermitian case. Given two Hermitian norms diagonalised in a common basis $\vbar$, define $\vee(\norm,\norm')$ as the unique norm diagonalised in the basis $\vbar$ and such that
$$\vee(\norm,\norm')(v_i)=\max(\norm[v_i],\norm[v_i]').$$
A simple computation then shows that
\begin{equation}\label{eq_max}
    d_1(\norm,\norm')=d_1(\norm,\vee(\norm,\norm'))+d_1(\norm',\vee(\norm,\norm')).
\end{equation}

\subsection{Spaces of sequences of norms.}

Let, as in the beginning of this section, $X$ be a compact projective complex manifold endowed with an ample line bundle $L$. For clarity, we will denote $V_k:=H^0(X,kL)$ and $N_k=\dim H^0(X,kL)=h^0(X,kL)$. We now take our interest to sequences $\norm_\bullet:k\mapsto \norm_k$, where $\norm_k$ is a norm on $V_k$.

Let us pick two such sequences $\norm_\bullet$ and $\norm'_\bullet$. Let us define, for $p\in[1,\infty]$,
$$d_p(\norm_\bullet,\norm'_\bullet):=\limsup_k k\mi d_p(\norm_k,\norm'_k).$$
We will say that $\norm_\bullet$ is \textit{$p$-bounded relative to $\norm'_\bullet$} if the limit above is finite. We note that this only defines a pseudo-distance on the set of equivalence classes of relatively $p$-bounded sequences of norms; in general, one defines an equivalence relation whereby two such sequences are \textit{$p$-equivalent} if and only if their $d_p$ distance is zero.

We also define the \textit{relative volume} of $\norm_\bullet$,$\norm'_\bullet$ as
$$\vol(\norm_\bullet,\norm'_\bullet):=\limsup_k k\mi \vol(\norm_k,\norm'_k).$$
A quick computation shows that $\vol$ is $1$-Lipschitz with respect to $d_1$, so that the relative volume is well-defined between two relatively $1$-bounded sequences of norms. Likewise, defining the max operator for sequences of Hermitian norms by
$$\vee(\norm_\bullet,\norm_\bullet'):k\mapsto \vee(\norm_k,\norm'_k)$$
shows that the asymptotic version of \eqref{eq_max} also holds:
\begin{equation}\label{eq_maxasym}
    d_1(\norm_\bullet,\norm'_\bullet)=d_1(\norm_\bullet,\vee(\norm_\bullet,\norm'_\bullet))+d_1(\norm'_\bullet,\vee(\norm_\bullet,\norm'_\bullet)).
\end{equation}

\subsection{Quantisation.}

Given a continuous positive metric $\phi$ on $L$, one can associate to it two different norms on $V_k$: a Hermitian norm $\norm_{k\phi}$, given by
$$\|s\|_{k\phi}^2=\int_X |s|e^{-k\phi} \,dV$$
for a smooth volume form $dV$, and a non-necessarily Hermitian \textit{sup-norm}
$$\|s\|_{k\phi,\infty}:=\sup_X |s|e^{-k\phi}.$$
To any norm $\norm\in\cN(V_k)$, one can associate its \textit{Fubini--Study metric}, a bounded positive metric on $L$, defined at $\ell\in (kL)_x$, $x\in X$, by
$$|\ell|_{\FS(\norm)}:=\inf_{s\in V_k,\,s(x)=\ell}\|s\|.$$
If $\norm$ is furthermore Hermitian, diagonalised in a basis $\vbar_k=(v_{1,k},\dots,v_{N_k,k})$ one recovers \cite[Lemma 2.1]{fin1} the usual definition
\begin{equation}\label{eq:fs}
\FS(\norm)=\log\sum_i |v_{i,k}|^2e^{-2\norm[v_{i,k}]}.
\end{equation}
Classical results on quantisation (historically \cite{bouche,catlin,tian,zelditch} in the smooth case, and e.g.\ \cite{fin1} in the continuous case) show that
$$\lim_k\FS(\norm_{k\phi})=\lim_k\FS(\norm_{k\phi,\infty})=\phi.$$

The space of bounded positive metrics on $L$ can also be endowed with distances closely related to the $d_p$ distances on spaces of norms \cite{darmabuchi}: given $\phi,\psi$ that are positive with respect to a Kähler form $\omega$ representing $c_1(L)$, one defines
$$d_p(\phi,\psi):=\inf_{t\mapsto \gamma_t}\int_0^1\left(\int_X |\dot\gamma_t|^p\,\MA(\gamma_t)\right)^{\frac1p}dt,$$
where the infimum is taken over smooth curves joining the two metrics. The case $p=2$ corresponds to the distance introduced by Mabuchi \cite{mabuchi}. The general case was treated by Darvas \cite{darmabuchi}. We define the \textit{metric completion} of $L^\infty\cap\PSH(X,L)$ with respect to $d_p$ by $\cE^p(X,L)$, following \cite{darmabuchi}; explicitly, they correspond with singular positive metrics $\phi$ on $L$ such that
$$\int_X |\phi|^p \MA(\phi)<\infty.$$
Given $1\leq p \leq q$, we have $\cE^q(X,L)\subseteq \cE^p(X,L)\subseteq \cE^1(X,L)$ in general, the latter space being called the space of \textit{finite energy} (positive) metrics.

As it turns out, by results of Berndtsson \cite{berndtssonprobability}, Darvas--Lu--Rubinstein \cite{dlr}, and Finski \cite[Theorem 1.3, Corollary 3.6]{fin1}, this quantisation procedure can also be metrised:
\begin{theorem}\label{thm_dpquant2}
Let $\phi,\psi$ be bounded positive metrics on $L$. Then,
$$d_p(\phi,\psi)=d_p(\norm_{\bullet\phi},\norm_{\bullet\psi})=d_p(\norm_{\bullet\phi,\infty},\norm_{\bullet\psi,\infty})$$
for $p\in[1,\infty)$, and also for $p=\infty$ if both metrics are continuous (or more generally bounded and regularisable from below, \cite[Proposition 2.15]{fin1}).
\end{theorem}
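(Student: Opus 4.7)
The plan is to first reduce the two asserted identities to one, and then prove the remaining identity via quantisation of geodesics. For the reduction, I would invoke a Bernstein–Markov type estimate: for any continuous positive metric $\phi$, the sup-norm $\norm_{k\phi,\infty}$ and the Hermitian norm $\norm_{k\phi}$ differ by a multiplicative factor $e^{o(k)}$, so $d_p(\norm_{\bullet\phi},\norm_{\bullet\phi,\infty})=0$ for every $p\in[1,\infty)$. Since the quasi-triangle inequality loses only an additive $\log N_k=O(\log k)=o(k)$ at finite level, it becomes invisible under $\limsup k^{-1}$, so $d_p(\norm_{\bullet\phi},\norm_{\bullet\psi})=d_p(\norm_{\bullet\phi,\infty},\norm_{\bullet\psi,\infty})$. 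The problem is thus reduced to showing $d_p(\phi,\psi)=d_p(\norm_{\bullet\phi},\norm_{\bullet\psi})$.

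For the upper bound $d_p(\phi,\psi)\leq d_p(\norm_{\bullet\phi},\norm_{\bullet\psi})$, I would use the Bergman-style quantisation of the Mabuchi geodesic. At level $k$, the Hermitian norms $\norm_{k\phi}$ and $\norm_{k\psi}$ admit a common orthogonal basis $\vbar_k$, and the unique $d_p$-geodesic in $\cN^H(V_k)$ between them is the explicit one-parameter family described earlier in the excerpt. Its Finsler length equals $d_p(\norm_{k\phi},\norm_{k\psi})$, and the path of Fubini–Study metrics $t\mapsto \FS(\norm_{k,t})$ is a smooth curve joining $\FS(\norm_{k\phi})$ and $\FS(\norm_{k\psi})$ whose $d_p$-Finsler length is bounded above by $k^{-1}d_p(\norm_{k\phi},\norm_{k\psi})$ plus a correction that vanishes in the limit. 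Since the endpoints converge to $\phi,\psi$ and the Bergman geodesics themselves converge to the weak Mabuchi geodesic (Berndtsson, Phong–Sturm), passing to $\limsup_k$ yields the desired inequality from the infimum definition of $d_p(\phi,\psi)$.

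For the reverse inequality $d_p(\phi,\psi)\geq d_p(\norm_{\bullet\phi},\norm_{\bullet\psi})$, the key is to express both sides as $L^p$-integrals of the same ``spectral function''. On the norm side, if we encode the successive minima as an atomic measure
\[
\mu_k:=N_k^{-1}\sum_{i=1}^{N_k}\delta_{\lambda_i(\norm_{k\phi},\norm_{k\psi})/k},
\]
then $k^{-p}d_p(\norm_{k\phi},\norm_{k\psi})^p=\int_\bbr |x|^p\,d\mu_k(x)$. By the comparison of asymptotic spectra with Duistermaat–Heckman type measures (Boucksom–Chen, Boucksom–Jonsson, Chen–Maclean), the $\mu_k$ converge weakly, with uniformly bounded support, to the pushforward under $\phi-\psi$ (more precisely under the appropriate Legendre-type transform) of a probability measure that reduces to the Monge–Ampère measure along the Mabuchi geodesic; this is exactly the measure that computes $d_p(\phi,\psi)^p$ as a Darvas Finsler length. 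Combining weak convergence with uniform boundedness gives the matching $L^p$-limit and hence equality.

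The main obstacle is the last step: establishing the weak convergence of the spectral measures $\mu_k$ with uniform tail control that legitimises passing $|x|^p$ through the limit. This is where one genuinely uses the interplay of the $L^2$-quantisation (which gives Hermitian norms with orthogonal bases, making successive minima well-defined) and the sup-norms (which are submultiplicative, hence control successive minima asymptotically from one side via Bernstein–Markov, and from the other via the direct Monge–Ampère comparison of Finski \cite{fin1,fin2}). Once this spectral convergence is in hand, both the $p\in[1,\infty)$ case and the $p=\infty$ case under the continuity/regularisability hypothesis follow, since for continuous metrics the spectral measures have uniformly bounded supports converging to a compact set.
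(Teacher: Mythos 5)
The paper offers no proof of this statement: it is imported verbatim from Berndtsson \cite{berndtssonprobability}, Darvas--Lu--Rubinstein \cite{dlr} and Finski \cite[Theorem 1.3, Corollary 3.6]{fin1}, so there is no internal argument to compare yours against. Your sketch does follow the broad strategy of those references (Bernstein--Markov reduction, Bergman geodesics for the upper bound, spectral measures for the lower bound), but in both main steps the inequality you need is asserted rather than derived, and in each case what is asserted is essentially the main theorem of the paper being cited.

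Concretely: for the upper bound you claim that the $d_p$-Finsler length of the Fubini--Study image $t\mapsto\FS(\norm_{k,t})$ of the finite-dimensional geodesic is at most $k^{-1}d_p(\norm_{k\phi},\norm_{k\psi})+o(1)$, and you point to the convergence of Bergman geodesics to the Mabuchi geodesic to pass to the limit. But uniform convergence of paths controls length only from below (lower semicontinuity), which yields $d_p(\phi,\psi)\le\liminf_k\ell_p(\FS(\norm_{k,t}))$ and nothing about the quantity you need; bounding $\ell_p(\FS(\norm_{k,t}))$ by the norm-space length requires comparing $\int_X|\partial_t\FS(\norm_{k,t})|^p\,\MA(\FS(\norm_{k,t}))$ with $N_k^{-1}\sum_i|\lambda_i|^p$, i.e.\ genuine (partial) Bergman kernel asymptotics --- this is the analytic core of \cite{dlr} and \cite{fin1}, not a consequence of Phong--Sturm/Berndtsson convergence. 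For the lower bound, the weak convergence of the relative spectral measures $\mu_k$ to the push-forward of the Monge--Amp\`ere measure along the geodesic is exactly the theorem to be proven; you flag it as ``the main obstacle'' but do not supply it, so the argument is circular as written. A smaller point: your reduction $d_\infty(\norm_{k\phi},\norm_{k\phi,\infty})=o(k)$ relies on a modulus of continuity of $\phi$ in the sub-mean-value argument, whereas the finite-$p$ statement is asserted for merely bounded metrics; in that generality one can only compare the norms in the $d_p$-sense, which is again part of what \cite{fin1} establishes.
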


As explained at the start of the section, one easily sees that the sup-norms associated to a continuous positive metric yield a \textit{submultiplicative} sequence of norms on the algebra of sections of $L$. This means that, given $s_m\in V_m$, $s_n\in V_n$, one has that
$$\norm[s_m\cdot s_n]_{(m+n)\phi,\infty}\leq \norm[s_m]_{m\phi,\infty}\cdot\norm[s_n]_{n\phi,\infty}.$$
More generally, picking a submultiplicative sequence of norms $\norm_\bullet$, the sequence
$(\FS(\norm_k))_k$ is pointwise superadditive. Fekete's lemma (\cite[Lemma 1.2.1]{book:probability}) then ensures that the pointwise limit $\lim_k k\mi \FS(\norm_k)$ exists and equals $\sup_k k\mi \FS(\norm_k)$. We define the Fubini--Study operator to be the usc regularisation of this limit,
$$\FS(\norm_\bullet):=\lim_k{}^*\FS(\norm_k),$$
which is a positive metric, being the usc regularisation of the supremum of a family of positive metrics. To ensure that this operator takes finite values, one must add another condition:
\begin{definition} Let $\norm_\bullet$ be a submultiplicative sequence of norms on the algebra of sections of $L$. We say that $\norm_\bullet$ is a \textit{bounded graded norm} on $R(X,L)$ if there exists a bounded positive metric on $L$ such that
$$d_\infty(\norm_\bullet,\norm_{\bullet\phi,\infty})<\infty.$$
\end{definition}
In particular, as a consequence of the previous theorem and the classification results for sequences of norms from \cite{fin1}, one has
\begin{theorem}[{\cite[Corollary 3.6]{fin1}}]\label{thm_dpquant}If $\norm_\bullet,\norm'_\bullet$ are two bounded graded norms on $L$, then for all $p\in[1,\infty)$,
$$d_p(\FS(\norm_\bullet),\FS(\norm'_\bullet))=d_p(\norm_\bullet,\norm'_\bullet).$$
\end{theorem}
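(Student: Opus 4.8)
The plan is to deduce the identity from Theorem~\ref{thm_dpquant2}, by comparing an arbitrary bounded graded norm $\norm_\bullet$ with the sup-norm sequence $\norm_{\bullet\phi,\infty}$ of its own Fubini--Study metric $\phi:=\FS(\norm_\bullet)$. The heart of the matter is the claim that $d_p(\norm_\bullet,\norm_{\bullet\phi,\infty})=0$ for every $p\in[1,\infty)$. Granting it, put $\phi:=\FS(\norm_\bullet)$ and $\psi:=\FS(\norm'_\bullet)$, both of which are bounded positive metrics; Theorem~\ref{thm_dpquant2} gives $d_p(\FS(\norm_\bullet),\FS(\norm'_\bullet))=d_p(\phi,\psi)=d_p(\norm_{\bullet\phi,\infty},\norm_{\bullet\psi,\infty})$, and the claim (applied to $\norm_\bullet$ and to $\norm'_\bullet$) together with the triangle inequality for $d_p$ on sequences of norms forces $d_p(\norm_\bullet,\norm'_\bullet)=d_p(\norm_{\bullet\phi,\infty},\norm_{\bullet\psi,\infty})$. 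Here one uses that this triangle inequality is genuine, not merely the quasi-triangle inequality of the Proposition above: the defect $\log N_k$ is annihilated by the factor $k\mi$ in the definition of $d_p(\norm_\bullet,\norm'_\bullet)$, since $N_k=h^0(X,kL)=O(k^n)$.

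To prove the claim I would control the successive minima $\lambda_{i,k}:=\lambda_i(\norm_k,\norm_{k\phi,\infty})$ by three estimates and then interpolate. First, a uniform bound $|\lambda_{i,k}|\leq Ck$: by definition a bounded graded norm lies at finite $d_\infty$-distance from the sup-norm sequence of some bounded positive metric $\psi_0$, and since $\psi_0-\phi$ is a bounded function, $d_\infty(\norm_{\bullet\psi_0,\infty},\norm_{\bullet\phi,\infty})<\infty$, whence $d_\infty(\norm_\bullet,\norm_{\bullet\phi,\infty})<\infty$. Second, a one-sided sublinear bound $\lambda_{i,k}\leq o(k)$ uniformly in $i$: from $|\ell|_{\FS(\norm_k)}=\inf_{s(x)=\ell}\norm[s]_k$ the sup-norm attached to the metric $\FS(\norm_k)$ is $\leq e^{o(k)}\norm_k$, while submultiplicativity and Fekete's lemma give $\FS(\norm_k)\leq\FS(\norm_\bullet)=\phi$ pointwise, so $\norm_{k\phi,\infty}\leq e^{o(k)}\norm_k$; hence $\lambda_{1,k}\leq o(k)$, and the $\lambda_{i,k}$ are decreasing in $i$. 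Third, vanishing of the relative volume, $\vol(\norm_\bullet,\norm_{\bullet\phi,\infty})=0$. Given these, the second estimate already gives $N_k\mi\sum_i\max(\lambda_{i,k},0)=o(k)$, and combining with the third (which controls the negative parts) $N_k\mi\sum_i|\lambda_{i,k}|=o(k)$, i.e.\ $d_1(\norm_k,\norm_{k\phi,\infty})=o(k)$; interpolating with the first, $d_p(\norm_k,\norm_{k\phi,\infty})^p=N_k\mi\sum_i|\lambda_{i,k}|^p\leq(Ck)^{p-1}\,d_1(\norm_k,\norm_{k\phi,\infty})=o(k^p)$, so $k\mi d_p(\norm_k,\norm_{k\phi,\infty})\to0$ and $d_p(\norm_\bullet,\norm_{\bullet\phi,\infty})=\limsup_k k\mi d_p(\norm_k,\norm_{k\phi,\infty})=0$.

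The third estimate is the only substantial input and is where I expect the main difficulty. A soft argument is blocked because no sub-exponential Bernstein--Markov bound $\norm_k\leq e^{o(k)}\norm_{k\phi,\infty}$ is available for a general bounded graded norm --- only the exponential bound $\norm_k\leq e^{O(k)}\norm_{k\phi,\infty}$ coming from the first estimate --- so the vanishing of the relative volume is not visible pointwise and must come from a global comparison. The plan is to invoke the identification of the asymptotic volume $\vol(\norm_\bullet)$ of a bounded graded norm (the leading behaviour of the covolumes of the unit balls, which exists as a genuine limit) with the Monge--Ampère energy of its Fubini--Study metric, $\vol(\norm_\bullet)=E(\FS(\norm_\bullet))$, a Fujita-type approximation statement (see \cite{fin1}), together with the fact that $\vol(\norm_\bullet,\norm_{\bullet\phi,\infty})$ depends on the two sequences only through these asymptotic volumes, up to an error that vanishes after the normalisation defining the relative volume. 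Since $\FS(\norm_{\bullet\phi,\infty})=\phi=\FS(\norm_\bullet)$, the two volumes coincide and the relative volume vanishes. Alternatively one can bypass the estimates and appeal directly to the classification of sequences of norms up to $d_p$ from \cite{fin1}, of which the stated identity is a special case; the route sketched above isolates which part of that classification is used.
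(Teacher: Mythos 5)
The paper offers no proof of this statement: it is quoted verbatim as an external result, \cite[Corollary 3.6]{fin1}, so there is no internal argument to compare yours against. Your proposal is a correct reconstruction of how that result is proven, and it correctly isolates the one genuinely deep input. The reduction to the single claim $d_p(\norm_\bullet,\norm_{\bullet\phi,\infty})=0$ with $\phi=\FS(\norm_\bullet)$ is the right move, and the assembly via Theorem~\ref{thm_dpquant2} and the asymptotic triangle inequality (with the $\log N_k=O(\log k)$ defect killed by the $k\mi$ normalisation) is sound. Your second estimate is in fact cleaner than you state: since $k\mi\FS(\norm_k)\leq\phi$ by Fekete and $\norm[s]_{\FS(\norm_k),\infty}\leq\norm[s]_k$ directly from the definition of $\FS$, one gets $\norm_{k\phi,\infty}\leq\norm_k$ exactly, so all successive minima are nonpositive and $d_1(\norm_k,\norm_{k\phi,\infty})=-\vol(\norm_k,\norm_{k\phi,\infty})$ on the nose; the interpolation $|\lambda_{i,k}|^p\leq(Ck)^{p-1}|\lambda_{i,k}|$ then upgrades $d_1$ to $d_p$ correctly. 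The entire weight of the argument thus rests on $\vol(\norm_\bullet,\norm_{\bullet\phi,\infty})=0$, i.e.\ on the identification of the asymptotic relative volume of a bounded graded norm against the sup-norms of its own Fubini--Study metric with a difference of Monge--Amp\`ere energies (a Berman--Boucksom/Fujita-type theorem, which is indeed one of the main results of \cite{fin1}). You do not prove this and defer it to the literature --- but that is no worse than what the paper does, which is to defer the whole theorem. Two minor points worth making explicit if you write this up: the cocycle property $\vol(\norm,\norm'')=\vol(\norm,\norm')+\vol(\norm',\norm'')$ and the identity between $\sum_i\lambda_i$ and log-ratios of covolumes hold for non-Hermitian norms only up to $O(\log N_k)$ errors, which again vanish after dividing by $k$; and the usc regularisation in the definition of $\phi$ only increases the weight, so it does not disturb the inequality $\norm_{k\phi,\infty}\leq\norm_k$.
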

Importantly, this allows us to work directly with general sequences of norms, which do not \textit{a priori} come from bounded metrics on $L$. The case $p=\infty$ is more subtle, but will not be necessary for the purpose of this article.

\subsection{More general sequences of norms.} We explain here how the aforementioned results on quantisation generalise to more abstract norms than those that \textit{a priori} come from positive metrics.

\begin{definition}Let $\norm_\bullet$ be a bounded graded norm. We say that a non-necessarily submultiplicative sequence of norms $\norm_\bullet'$ is \textit{Bernstein--Markov} with respect to $\norm_\bullet$ if, for all $\varepsilon>0$, there exists $C_\varepsilon>1$ such that for all $k$,
$$C_\varepsilon\mi e^{-\varepsilon k}\norm_k \leq \norm'_k \leq C_\varepsilon e^{\varepsilon k}\norm_k,$$
or equivalently
$$d_\infty(\norm_k,\norm'_k)\leq k\varepsilon + \log C_\varepsilon.$$
\end{definition}

\begin{example}[{\cite[Lemma 3.2]{bb}}]
    Given a continuous positive metric $\phi$ on $L$, the sequence of $L^2$-norms associated to $\phi$ (with respect to a smooth volume form) is Bernstein--Markov with respect to the sequence of sup-norms associated to $\phi$.
\end{example}

\begin{proposition}\label{prop_convergence}
    Let $\norm_\bullet$ be a bounded graded norm, and $\norm'_\bullet$ be a sequence of norms that is Bernstein--Markov with respect to $\norm_\bullet$. Then, $\FS(\norm_\bullet)$ exists as a positive bounded metric, and
    $$\FS(\norm_\bullet)=\FS(\norm'_\bullet).$$
\end{proposition}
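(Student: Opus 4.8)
The plan is to show both equalities in Proposition~\ref{prop_convergence} by exploiting the Bernstein--Markov comparison together with the $d_\infty$-stability of the Fubini--Study operator. First I would observe that the Bernstein--Markov condition $d_\infty(\norm_k,\norm'_k)\leq k\varepsilon+\log C_\varepsilon$ immediately gives, after dividing by $k$ and taking $\limsup$, that $d_\infty(\norm_\bullet,\norm'_\bullet)\leq\varepsilon$ for every $\varepsilon>0$, hence $d_\infty(\norm_\bullet,\norm'_\bullet)=0$; in particular $\norm'_\bullet$ is $\infty$-bounded relative to $\norm_\bullet$, so since $\norm_\bullet$ is a bounded graded norm there is a bounded positive metric $\phi$ with $d_\infty(\norm'_\bullet,\norm_{\bullet\phi,\infty})<\infty$. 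This already shows $\norm'_\bullet$ is a (not necessarily submultiplicative) sequence of norms staying at finite $d_\infty$-distance from a genuine sup-norm sequence, which is what one needs for the Fubini--Study limit to make sense.

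Next I would establish existence of $\FS(\norm'_\bullet)$ and the equality with $\FS(\norm_\bullet)$ by comparing the finite-level Fubini--Study metrics directly. The key pointwise estimate is that the Fubini--Study operator $\norm\mapsto\FS(\norm)$ is $1$-Lipschitz (indeed an isometry onto its image in the appropriate sense) for the $d_\infty$/$L^\infty$ comparison: if $C_\varepsilon\mi e^{-\varepsilon k}\norm_k\leq\norm'_k\leq C_\varepsilon e^{\varepsilon k}\norm_k$ then, since the Fubini--Study metric at a point is an infimum of $\norm[\cdot]$ over sections with prescribed value, scaling the norm by a constant $\lambda$ scales $|\ell|_{\FS(\norm)}$ by $\lambda$, and hence
$$\FS(\norm_k)-2(k\varepsilon+\log C_\varepsilon)\leq \FS(\norm'_k)\leq \FS(\norm_k)+2(k\varepsilon+\log C_\varepsilon)$$
as metrics on $L$ (the factor $2$ coming from the logarithmic normalisation $\FS(\norm)=\log\sum|v_i|^2e^{-2\norm[v_i]}$ in the Hermitian case, or more intrinsically from $|\ell|^2$ vs $|\ell|$). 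Dividing by $k$, the correction terms go to $2\varepsilon$, which is arbitrary. Since $\norm_\bullet$ is submultiplicative, $\FS(\norm_k)/k$ converges (after usc regularisation) to $\FS(\norm_\bullet)$ by Fekete's lemma as recalled above, and the uniform-in-$k$ sandwiching then forces $\limsup^*\FS(\norm'_k)=\liminf_*\FS(\norm'_k)=\FS(\norm_\bullet)$, so $\FS(\norm'_\bullet)$ exists and equals $\FS(\norm_\bullet)$. The boundedness and positivity of $\FS(\norm_\bullet)$ are part of its definition as a bounded graded norm's Fubini--Study metric, so $\FS(\norm'_\bullet)$ inherits these.

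The main obstacle I anticipate is making the sandwiching argument clean at the level of usc regularised limits rather than naive pointwise limits: $\FS(\norm'_k)/k$ need not converge pointwise everywhere, only up to usc regularisation, and one must check that the additive $O(\varepsilon)$ error, which is uniform in $x\in X$ and in $k$, passes through the $\mathrm{usc}\,\lim$ operation — this is where I would be careful, using that for sequences $u_k,w_k$ of functions with $\|u_k-w_k\|_{L^\infty}\leq\eta_k\to\eta$ one has $|\mathrm{usc}\,\limsup u_k-\mathrm{usc}\,\limsup w_k|\leq\eta$ pointwise. A secondary point is verifying that one may apply Theorem~\ref{thm_dpquant} or the $p=\infty$ results only where licit; but in fact the argument above is self-contained and does not need the $d_p$ quantisation theorems, only the elementary scaling behaviour of $\FS$ and Fekete's lemma, so I would phrase the proof that way to keep it short.
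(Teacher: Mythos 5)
Your proof is correct and follows essentially the same route as the paper's: existence of $\FS(\norm_\bullet)$ via Fekete's lemma and boundedness, then the equality via the Lipschitz behaviour of the Fubini--Study operator under $d_\infty$-perturbations combined with $k\mi d_\infty(\norm_k,\norm'_k)\to 0$ from the Bernstein--Markov property. Your extra care with the usc regularisation and the (convention-dependent) factor of $2$ only elaborates on what the paper leaves as "obvious".
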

\begin{proof}
Because $\norm_\bullet$ is submultiplicative, the limit $\lim_k k\mi\FS(\norm_k)$ exists by Fekete's lemma, and equals $\sup_k k\mi\FS(\norm_k)$. Boundedness is easily seen to ensure that this supremum is finite, hence $\FS(\norm_\bullet)$ is the regularised supremum of a bounded above and below family of positive metrics, hence is itself a bounded positive metric. The equality then follows from the fact that $\norm\mapsto k\mi\FS(\norm)$ is obviously $1$-Lipschitz with respect to $d_\infty$ on $\cN^H(H^0(X,kL))$ and the supremum norm on $L^\infty\cap\PSH(X,L)$, while the Bernstein--Markov property implies that $k\mi d_\infty(\norm_k,\norm_k')\to 0$.
\end{proof}
\section{Filtrations and Duistermaat--Heckman measures.}

\subsection{Bounded graded non-Archimedean norms.}\label{subsect_bgn} If $V$ is as before a complex vector space of dimension $N$, one can also consider the space of \textit{non-Archimedean norms} on $V$. Namely, the trivial absolute value $|\cdot|_0$ on $\bbc$, defined by $|z|_0=1$ if and only if $z\neq 0$, is non-Archimedean: trivially, one sees that
$$|z+z'|\leq\max(|z|,|z'|).$$
A non-Archimedean norm on $V$ is a function $\norm\NA:V\to\bbr$ satisfying the usual norm axioms, but with respect to $|\cdot|_0$, and satisfying the \textit{ultrametric} triangle inequality:
$$\norm[v+w]\NA\leq\max(\norm[v]\NA,\norm[w]\NA).$$
An exercise in linear algebra shows that such norms can all be constructed by fixing a basis $\underline v=(v_i)$ of $V$, real numbers $\underline a=(a_i)$, and setting
$$\norm[\sum c_i v_i]\NA:=\max_{i,c_i\neq 0}e^{-a_i}.$$
A basis $\underline v$ in which $\norm\NA$ can be expressed as above is called an \textit{orthogonal basis}, in the non-Archimedean sense. Conversely, any non-Archimedean norm on $V$ admits an orthogonal basis \cite[Example 1.11]{be}.

There is also a trivial norm $\norm_0$ on $V$, defined by $\|v\|_0=1$ if and only if $v\neq 0$. It has the property that any basis of $V$ is an orthogonal basis.

We denote the space of such norms by $\cN\NA(V)$. We note that this space can also be metrised in a similar way as in the complex case, but this will not be required for the purpose of this article. In fact, in a strong sense, $\cN\NA(V)$ with such metric structures can be understood as the boundary at infinity of the negatively curved space $(\cN^H(V),d_p)$.

Returning now to the geometric setting, where $V_k=H^0(X,kL)$ for an ample line bundle on $X$, and $\dim V_k=N_k$, one can also define an appropriate notion of a sequence of norms on the section ring of $L$.

We define a \textit{bounded graded non-Archimedean norm} on $L$ to be a sequence of non-Archimedean norms $\norm_\bullet\NA$, where $\norm_k\NA$ is a non-Archimedean norm on $V_k$, satisfying the following two properties:
\begin{enumerate}
    \item \textit{submultiplicativity:} for all $s_m\in H^0(X,mL)$ and $s_n\in H^0(X,nL)$,
$$\norm[s_m\cdot s_n]^{NA}_{m+n,\varphi}\leq \norm[s_m]\NA_{m,\varphi}\norm[s_n]\NA_{n,\varphi};$$
    \item \textit{boundedness:} there exists $\delta>0$, such that for all $k$,
$$e^{-\delta k}\norm_{0,k}\leq \norm\NA_k\leq e^{\delta k}\norm_{0,k}.$$
Here, $\norm_{0,k}$ is the \textit{trivial norm} on $V_k$.
\end{enumerate}
Bounded graded norms are in bijection with \textit{bounded graded filtrations} of the section ring, in the sense of \cite{dwnfiltr,bchen,bjkstab1}, as in the introduction. In particular, test configurations \cite{bjkstab1} and divisorial valuations \cite{bjkstab2} induce examples of bounded graded norms. The setting of filtrations is more likely to be familiar to the reader, but our results will be more easily stated in terms of non-Archimedean norms, justifying our choice of notation.

\subsection{The Duistermaat--Heckman measure and convex functions.}\label{subsect_dh}

Let $\norm_\bullet\NA$ be a bounded graded non-Archimedean norm. To each $k$, one can define a measure $\sigma_k(\norm_k\NA)$ on the real numbers as follows. Let $(s_{i,k})_{i}$ be an orthogonal basis for $\norm_k\NA$. We then set
$$\sigma_k(\norm_k\NA):=k^{-n}_*\sum_{i=1}^{N_k}\delta_{-k\mi\log\left(\norm[s_{i,k}]\NA\right)}.$$
By \cite{bchen,cmac}, the measures $\sigma_k(\norm_k\NA)$ converge weakly to a measure $\sigma(\norm_\bullet\NA)$ supported on a closed interval of $\bbr$, independently of the choice of orthogonal basis for each $\norm_k\NA$. We define the \textit{Duistermaat--Heckman measure} of $\norm_\bullet\NA$ to be this weak limit.

Our main result in this section is the observation that, given a bounded graded non-Archimedean norm and a bounded decreasing convex function $\mff$ on the support of the Duistermaat--Heckman measure, one can construct a different bounded graded non-Archimedean norm. This construction will be essential in the proof of our main theorem.

\bsni\textbf{The construction.} Fix $\norm\NA_\bullet$, and $(s_{i,k})_i$ a sequence of orthogonal bases as above. Let $\mff$ be a bounded convex, decreasing function on the support of $\sigma(\norm_\bullet\NA)$, with bounded right-derivative.

Define now, for each $k$, a non-Archimedean norm $\norm\NA_{k,\mff}$ as the unique norm satisfying the following two properties:
\begin{enumerate}
    \item $\norm\NA_{k,\mff}$ admits $(s_{i,k})_i$ as an orthogonal basis;
    \item for each $i=1,\dots,N_k$, one has
    $$\norm[s_{i,k}]\NA_{k,\mff}=e^{k\cdot f\left(\frac{-\log\norm[s_{i,k}]\NA_k}{k}\right)}.$$
\end{enumerate}

One needs, as pointed out by an anonymous referee whom we thank for this observation, that  $k\mi \log\norm[s_{i,k}]\NA_k$ lies in the support $[a,b]\subset \bbr$ of the Duistermaat--Heckman measure. Although this is true for finitely generated graded norms, or those induced by divisorial valuations, it is possible that some of those values are smaller than $a$, as follows from \cite[Corollary 5.4, Remark 5.5]{bhj}. By those same results, since $\norm_\bullet\NA$ is bounded, all the $k\mi \log\norm[s_{i,k}]\NA_k$ lie in a segment $[-C,b]$ for a constant $C>0$ depending only on $\norm_\bullet\NA$.

However, we can canonically extend $\mff$ over $[-C,b]$ to a decreasing, convex, bounded function, where for $t<a$ it will be affine with slope the right-derivative of $\mff$ at $a$ (which is finite by assumption). This ensures the construction above to be well-defined, and the values of the extension of $\mff$ outside the support will not matter, as the asymptotics of the norms concentrate on the support $[a,b]$ of $\sigma(\norm_\bullet^{NA})$; in particular, any choice of a decreasing, bounded, convex extension to $[-C,b]$ will suit. We will always implicitly perform this procedure in what follows, denoting by $\mff$ again this choice of an extension. To extend to general $\mff$ with unbounded right derivative, we will use an approximation procedure in the proof of the main theorem.

\begin{example}If $\mff\equiv 0$, then for each $k$, $\norm_{k,\mff}\NA$ is the trivial norm, equal to $1$ on $V_k-\{0\}$. If $\mff(x)=-x$, then $\norm_{\bullet,\mff}\NA=\norm_{\bullet}\NA$.
\end{example}

\begin{theorem}\label{thm_submult}
    The sequence $\norm_{\bullet,\mff}\NA$ is a bounded graded non-Archimedean norm.
\end{theorem}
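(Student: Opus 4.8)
The plan is to reduce both required properties — boundedness and submultiplicativity — to elementary facts about the single auxiliary function $\psi(x):=x-\mff(x)$. First I would record a normalisation. By the boundedness of $\norm_\bullet\NA$, every number $-k\mi\log\norm[s_{i,k}]\NA_k$ lies in $[-\delta,\delta]$, so I would begin by extending $\mff$ to a bounded, convex, decreasing function on all of $[-\delta,\delta]$ (extend it affinely past the left endpoint of $\mathrm{supp}(\sigma)$, with the one-sided slope of $\mff$ there, and by a constant past the right endpoint; this preserves the three properties on the bounded interval) and keep calling it $\mff$. Set $\psi:=\mathrm{id}-\mff$ on $[-\delta,\delta]$. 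Two observations drive everything: since $\mff$ is \emph{decreasing}, $\psi$ is strictly increasing ($\psi(y)-\psi(x)\ge y-x>0$ for $y>x$); and since $\mff$ is \emph{convex}, $\psi$ is \emph{concave}, being the identity minus a convex function.

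The heart of the argument, which I would establish next, is that $\norm_{\bullet,\mff}\NA$ — a priori defined only through its values on the chosen orthogonal basis — obeys a basis-independent pointwise formula. Writing $a_k(v):=-k\mi\log\norm[v]\NA_k$ and $a_{k,\mff}(v):=-k\mi\log\norm[v]\NA_{k,\mff}$ for $v\in V_k\setminus\{0\}$, I claim $a_{k,\mff}(v)=\psi(a_k(v))$ for all such $v$. On the basis elements $s_{i,k}$ this is just property (2) of the construction rewritten. For a general $v=\sum_ic_is_{i,k}$, orthogonality of $(s_{i,k})$ for both $\norm_k\NA$ and $\norm_{k,\mff}\NA$ gives $\norm[v]\NA_k=\max_{c_i\neq0}\norm[s_{i,k}]\NA_k$ and $\norm[v]\NA_{k,\mff}=\max_{c_i\neq0}\norm[s_{i,k}]\NA_{k,\mff}$; taking $-k\mi\log$ turns maxima into minima, so $a_k(v)=\min_{c_i\neq0}a_k(s_{i,k})$ and $a_{k,\mff}(v)=\min_{c_i\neq0}\psi(a_k(s_{i,k}))=\psi\big(\min_{c_i\neq0}a_k(s_{i,k})\big)=\psi(a_k(v))$, the middle step being exactly where monotonicity of $\psi$ (hence the decreasing hypothesis on $\mff$) enters. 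Equivalently $\norm[v]\NA_{k,\mff}=\norm[v]\NA_k\,e^{k\mff(a_k(v))}$ for all $v$, a formula that visibly does not depend on the chosen basis, so the construction is well defined.

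Given this, boundedness is immediate: if $|\mff|\le M$ on $[-\delta,\delta]$ then $e^{-Mk}\norm[v]\NA_k\le\norm[v]\NA_{k,\mff}\le e^{Mk}\norm[v]\NA_k$ for every $v$, and combining with the boundedness of $\norm_\bullet\NA$ gives $e^{-(M+\delta)k}\norm_{0,k}\le\norm_{k,\mff}\NA\le e^{(M+\delta)k}\norm_{0,k}$. For submultiplicativity I would fix nonzero $s_m\in V_m$, $s_n\in V_n$ and set $\alpha:=a_m(s_m)$, $\beta:=a_n(s_n)$, $\gamma:=\tfrac{m\alpha+n\beta}{m+n}$. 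Submultiplicativity of $\norm_\bullet\NA$ says $(m+n)a_{m+n}(s_ms_n)\ge m\alpha+n\beta$, i.e.\ $a_{m+n}(s_ms_n)\ge\gamma$; applying the increasing map $\psi$ together with the pointwise formula, $a_{m+n,\mff}(s_ms_n)=\psi(a_{m+n}(s_ms_n))\ge\psi(\gamma)$; and concavity of $\psi$ (Jensen, with weights $\tfrac m{m+n},\tfrac n{m+n}$) gives $\psi(\gamma)\ge\tfrac m{m+n}\psi(\alpha)+\tfrac n{m+n}\psi(\beta)$. Multiplying by $m+n$ and using $a_{m,\mff}(s_m)=\psi(\alpha)$, $a_{n,\mff}(s_n)=\psi(\beta)$ yields $(m+n)a_{m+n,\mff}(s_ms_n)\ge m\,a_{m,\mff}(s_m)+n\,a_{n,\mff}(s_n)$, which is the desired inequality $\norm[s_ms_n]\NA_{m+n,\mff}\le\norm[s_m]\NA_{m,\mff}\norm[s_n]\NA_{n,\mff}$ after exponentiating.

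I expect the only genuinely delicate step to be the pointwise formula $a_{k,\mff}(v)=\psi(a_k(v))$: this is what transports the construction from the fixed basis to arbitrary sections, and it is precisely there that the \emph{decreasing} hypothesis is used, so that $\psi$ is monotone and commutes with the minima produced by the ultrametric inequality. Everything downstream — boundedness, and submultiplicativity via Jensen — is then routine; in particular no surjectivity of the multiplication maps $V_m\otimes V_n\to V_{m+n}$ is required, since the argument only inspects products of sections rather than all of $V_{m+n}$.
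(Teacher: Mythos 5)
Your proof is correct, and it rests on exactly the two pillars of the paper's argument: the decreasing hypothesis preserves the order of the (normalised, negated) logarithms of the norm, and convexity enters through a Jensen inequality with weights $\tfrac{m}{m+n},\tfrac{n}{m+n}$. What differs is the packaging. The paper proves submultiplicativity in two stages: first for sections belonging to the fixed orthogonal bases, by decomposing $s_m\cdot s_n$ in the orthogonal basis of $V_{m+n}$ and chasing the chain $\alpha+\beta\le\gamma\le\delta_j$; then for arbitrary sections via the ultrametric inequality. Your pointwise identity $a_{k,\mff}(v)=\psi(a_k(v))$ with $\psi=\mathrm{id}-\mff$, valid for \emph{every} nonzero $v$ because the increasing map $\psi$ commutes with the minima produced by orthogonality, collapses the two stages into one and lets you run the Jensen step directly on arbitrary $s_m$, $s_n$. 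It also shows that $\norm\NA_{\bullet,\mff}$ is independent of the chosen orthogonal bases, a point the paper leaves implicit. So the mechanism is the same, but your version is cleaner and slightly more informative.

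One caveat, which you inherit from the statement rather than create: for finite $k$ the numbers $-k\mi\log\norm[s_{i,k}]\NA_k$ need not lie in the support of $\sigma(\norm_\bullet\NA)$ (the paper's assertion that $\alpha/m$ and $\beta/n$ do is not justified in general, since the supports of the $\sigma_k$ may strictly contain that of the weak limit), so some extension of $\mff$ is indeed required and you are right to address it. However, your affine extension past the left endpoint presupposes that the one-sided slope of $\mff$ there is finite; a bounded convex decreasing function on a closed interval can have infinite one-sided derivative at its left endpoint (e.g.\ $x\mapsto -\sqrt{x-a}$), in which case no bounded convex decreasing extension to the left exists at all. This is a defect of the construction as stated, shared with the paper, rather than of your argument, but it deserves an explicit remark or a mild additional hypothesis on $\mff$.
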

\begin{proof}
    It is easy to see that, if $\norm_\bullet\NA$ is bounded, then its modification by $\mff$ is also bounded, with constant $C=\sup \mff$. We thus only have to show submultiplicativity: for $s_m\in H^0(X,mL)$ and $s_n\in H^0(X,nL)$, we want to prove that
    $$\norm[s_m\cdot s_n]_{m+n,\mff}\NA\leq\norm[s_m]_{m,\mff}\NA\norm[s_n]_{n,\mff}\NA.$$
    To that end, we proceed in a similar manner to \cite[Theorem 2.6.1]{reb2}: we consider first the following special case, where we assume that $s_m$ belongs to an orthogonal basis for $\norm_m\NA$, and $s_n$ to an orthogonal basis for $\norm_n\NA$. Let us write $\alpha=-\log\norm[s_m]\NA_m$, and $\beta=-\log\norm[s_n]\NA_n$, so that having possibly extended $\mff$ slightly as explained before, $\alpha/m$ and $\beta/n$ belong to the definition set of $\mff$. Since $s_m\cdot s_n\in H^0(X,(m+n)L)$, there is a decomposition
    $$s_m\cdot s_n=\sum a_j s_{j,m+n}$$
    according to the fixed orthogonal basis $(s_{j,m+n})$ for $\norm\NA_{m+n}$. Let us write
    $$\gamma:=-\log\norm[s_m\cdot s_n]_{m+n}\NA.$$
    Since for all $j$ with $s_j\neq 0$ we have 
    $$\norm[s_{j,m+n}]_{m+n}\NA\leq \max_{a_j\neq 0}\norm[s_{j,m+n}]_{m+n}\NA=\norm[s_m\cdot s_n]_{m+n}\NA,$$
    then for all $\delta_j=-\log \norm[s_{j,m+n}]_{m+n}\NA$ with $a_j\neq 0$ we have
    $$\gamma\leq \delta_j,$$
    and furthermore since $\norm\NA_\bullet$ is submultiplicative, one has
    \begin{equation}\label{eq_1}
        \alpha+\beta\leq\gamma\leq \delta_j.
    \end{equation}
    Note now that
    $$\frac{\alpha+\beta}{m+n}=\frac{m}{m+n}\frac{\alpha}{m}+\frac{n}{m+n}\frac{\beta}{n}.$$
    Convexity of $\mff$ then yields
    $$\mff\left(\frac{\alpha+\beta}{m+n}\right)\leq \frac{m}{m+n}\mff\left(\frac{\alpha}{m}\right)+\frac{n}{m+n}\mff\left(\frac{\beta}{n}\right),$$
    so that
    \begin{equation}\label{eq_conv}
        (m+n)\mff\left(\frac{\alpha+\beta}{m+n}\right)\leq m\mff\left(\frac{\alpha}{m}\right)+n\mff\left(\frac{\beta}{n}\right).
    \end{equation}
    With those results in hand, we can prove submultiplicativity in this first special case. Indeed,
    \begin{align*}
        \norm[s_m\cdot s_n]_{m+n}\NA&=\max_{a_j\neq 0}e^{(m+n)\mff\left(\frac{\delta_j}{m+n}\right)}.
    \end{align*}
    By \eqref{eq_1}, and the fact that $\mff$ is decreasing, we have in particular that
    $$\norm[s_m\cdot s_n]_{m+n}\NA\leq e^{(m+n)\mff\left(\frac{\alpha+\beta}{m+n}\right)}.$$
    Thus, by \eqref{eq_conv},
    \begin{align*}
        \norm[s_m\cdot s_n]_{m+n}\NA&\leq e^{m\mff\left(\frac{\alpha}{m}\right)}e^{n\mff\left(\frac{\beta}{n}\right)}\\
        &=\norm[s_m]_{m,\mff}\NA\norm[s_n]_{n,\mff}\NA
    \end{align*}
    by definition.

    We now show submultiplicativity in general. Let $s_m$, $s_n$ be arbitrary sections, which we decompose as
    $$s_m=\sum_{i}c_i s_{i,m},\,s_n=\sum_j c_j s_{j,n}$$
    in orthogonal bases $(s_{i,m})_i$ for $\norm_m^{NA}$ and $(s_{j,n})_j$ for $\norm_n^{NA}$. We then have that
    \begin{align*}
    \norm[s_m\cdot s_n]^{NA}_{m+n,\mff}&\leq \max_{i,c_i\neq 0;j,c_j\neq 0}\norm[s_{i,m}\cdot s_{j,n}]^{NA}_{m+n,\mff}\\
    &\leq \max_{i,c_i\neq 0;j,c_j\neq 0}\norm[s_{i,m}]_{m,\mff}^{NA}\cdot\norm[s_{j,n}]^{NA}_{n,\mff}\\
    &\leq \left(\max_{i,c_i\neq 0}\norm[s_{i,m}]^{NA}_{m,\mff}\right)\cdot\left(\max_{j,c_j\neq 0}\norm[s_{j,n}]^{NA}_{n,\mff}\right)\\
    &=\norm[s_m]^{NA}_{m,\mff}\norm[s_n]^{NA}_{n,\mff}.
\end{align*}
The first inequality follows from the non-Archimedean triangle inequality. The second inequality follows from the result proven above, given that each $s_{i,m}$ and $s_{j,n}$ belong to the relevant orthogonal bases. The last equality is then definitional, concluding the proof.
\end{proof}

The flat cone we will embed in the space of positive metrics is therefore the space of bounded, decreasing convex functions on the support of the Duistermaat--Heckman measure of a bounded graded norm. It is easily seen to be infinite-dimensional, and flat since $L^p$ with $p>1$ is a strictly convex norm.

\section{Embedding the flat cone.}

\subsection{Rescaling by filtrations, I: the finite-dimensional case.}

Let $V$ be a finite-dimensional complex vector space, and $\underline v$ be a fixed basis of $V$. We define the \textit{apartment} $\cA_\vbar\subset\cN^H(V)$ to be the set of all Hermitian norms admitting $\vbar$ as an orthogonal basis. For $p\in[1,\infty]$, this is easily seen to be isometric to $(\bbr^N,L^p)$ \textit{via} the following construction: one fixes a Hermitian norm $\norm\in\cA_\vbar$, and one defines for $\abar\in\bbr^N$
$$\iota_\vbar(\norm,\abar)$$
to be the unique Hermitian norm in $\cA_\vbar$ such that, for all $i$,
$$\iota_\vbar(\norm,\abar)(v_i)=\norm[v_i]e^{-a_i}.$$
This defines a rescaling map
$$\iota_\vbar(\cdot,\cdot):\cA_\vbar\times\bbr^N\to\cA_\vbar.$$

One would like to extend this definition to non-Hermitian norms. Of course, one cannot in general pick an arbitrary diagonalising basis (as those may not exist in general) for this operation. One instead uses an \textit{envelope}-type construction for rescaling. To that end, we construct a \textit{gauge} non-Archimedean norm as follows. Having fixed a basis $\underline v$, and a vector $\abar$, let
$$\norm^{NA}_{\vbar,\underline a}:V\to\bbr_{>0}$$
be the non-Archimedean norm given by
$$\norm[\sum \alpha_i v_i]^{NA}_{\underline a}=\max_{i,\alpha_i\neq 0}e^{-a_i}.$$
We then define the rescaling of $\norm$ \textit{via}
\begin{equation}\iota_\vbar^1(\norm,\abar)(w):=\inf_{w=\sum w_i}\left\{\sum_i \norm[w_i]\norm[w_i]^{NA}_{\vbar,\underline a}\right\},\end{equation}
where the infimum ranges over all decompositions of $w$ as a sum of vectors $w_i\in V$. 

Note that this indeed defines a norm: it is clearly positive definite; $|\cdot|$-homogeneity comes from $|\cdot|$-homogeneity of $\norm$ and $|\cdot|_0$-homogeneity of $\norm^{NA}$; and the triangle inequality from the fact that the data of a decomposition of $v$ and of a decomposition of $w$ gives a decomposition of $v+w$ for any $v,w\in V$.

This gives a rescaling map
$$\iota_{\underline v}^1(\cdot,\cdot):\cN(V)\times\bbr^N\to\cN(V).$$
More generally, we will occasionally denote by
$$\iota^1(\cdot,\cdot):\cN(V)\times\cN\NA(V)\to \cN(V)$$
be the map sending a norm to its envelope by a non-Archimedean norm as defined above. In this case,
$$\iota^1(\norm,\norm\NA_{\vbar,\abar})=\iota^1_\vbar(\norm,\abar).$$
Note that, if $\norm$ is Hermitian, then there always exists a basis $\underline v$ which is orthogonal for $\norm$ and $\norm\NA$. Defining a vector $\underline a$ with components $a_i=-\log\norm[v_i]\NA$, one then has that
$$\iota^1(\norm,\norm\NA)=\iota^1(\norm,\underline a)=\iota(\norm,\underline a).$$
Importantly, the vector $\underline a$ depends on the choice of such a jointly orthogonal basis.

The following result, which follows from the case $p=\infty$ proven in \cite[Lemma 2.8]{fin1}, shows that these rescaling operations have a very nicely controlled metric distortion. In our applications, where we will consider sequences of vector spaces $V_k$ whose dimensions will grow at most as $k^n$ for some fixed $n$, this will imply that this distortion is only growing as $o(k)$.
\begin{lemma}\label{lem_dpcomp}
Let $\underline v$ be a basis of $V$, $\abar\in\bbr^N$, and let $\norm\in\cA_{\underline v}(V)$, $\norm'\in\cN(V)$. Then, for $p\in[1,\infty]$,
$$d_p(\iota_{\underline v}(\norm,\abar),\iota_{\underline v}^1(\norm',\abar))\leq d_\infty(\norm,\norm')+\log\dim V.$$
\end{lemma}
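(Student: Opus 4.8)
The plan is to reduce to the already-known case $\norm = \norm'$ (i.e.\ the case proven in \cite[Lemma 2.8]{fin1}, which is exactly the statement that $\iota_{\underline v}(\norm,\abar)$ and $\iota^1_{\underline v}(\norm,\abar)$ are $d_\infty$-comparable up to $\log\dim V$ when the first argument is Hermitian and diagonalises the gauge) by first controlling how the rescaling maps move when we change the \emph{norm} argument. So I would first establish that both rescalings are $1$-Lipschitz in the norm argument with respect to $d_\infty$ — more precisely, for fixed $\abar$, $d_\infty(\iota_{\underline v}(\norm_1,\abar),\iota_{\underline v}(\norm_2,\abar)) = d_\infty(\norm_1,\norm_2)$ on the apartment $\cA_{\underline v}$ (this is immediate since on the apartment everything is diagonal in $\underline v$ and the successive minima just get shifted by the fixed vector $\abar$), and $d_\infty(\iota^1_{\underline v}(\norm',\abar),\iota^1_{\underline v}(\norm'',\abar)) \le d_\infty(\norm',\norm'')$ in general. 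The latter follows directly from the envelope definition: if $e^{-t}\norm'' \le \norm' \le e^{t}\norm''$ pointwise with $t = d_\infty(\norm',\norm'')$, then plugging into the infimum over decompositions $w = \sum w_i$ in the formula for $\iota^1$ gives $e^{-t}\iota^1_{\underline v}(\norm'',\abar) \le \iota^1_{\underline v}(\norm',\abar) \le e^{t}\iota^1_{\underline v}(\norm'',\abar)$ pointwise, and pointwise domination of norms implies domination of all successive minima, hence the $d_\infty$ bound.

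With these two Lipschitz estimates in hand, the triangle inequality for $d_p$ on $\cN^H(V)$ (and the quasi-triangle inequality on $\cN(V)$ — but here I can route through the genuinely metric space, see below) gives
\begin{align*}
d_p(\iota_{\underline v}(\norm,\abar),\iota^1_{\underline v}(\norm',\abar)) &\le d_p(\iota_{\underline v}(\norm,\abar),\iota^1_{\underline v}(\norm,\abar)) + d_p(\iota^1_{\underline v}(\norm,\abar),\iota^1_{\underline v}(\norm',\abar)).
\end{align*}
The first term on the right is bounded by $\log\dim V$ by the known case \cite[Lemma 2.8]{fin1} (note $\norm$ is Hermitian and $\iota_{\underline v}(\norm,\abar) = \iota^1_{\underline v}(\norm,\abar)$ holds \emph{as $d_\infty$-comparable} there, since $\underline v$ jointly diagonalises $\norm$ and the gauge $\norm^{NA}_{\vbar,\abar}$), bounding $d_p$ by $d_\infty$ by the inequality $d_p \le d_\infty$ from the excerpt. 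The second term is bounded by $d_\infty(\iota^1_{\underline v}(\norm,\abar),\iota^1_{\underline v}(\norm',\abar)) \le d_\infty(\norm,\norm')$ by the envelope-Lipschitz estimate above and $d_p \le d_\infty$ again. Adding gives the claimed bound. One subtlety: applying the triangle inequality to $d_p$ requires that the three norms involved behave well — but $\iota_{\underline v}(\norm,\abar)$ is Hermitian and $d_p$ restricted to $\cN^H(V)$ is a genuine metric, while the other two are compared only by $d_\infty$, which dominates $d_p$, so I would instead split as $d_p(\iota_{\underline v}(\norm,\abar),\iota^1_{\underline v}(\norm,\abar)) + d_\infty(\iota^1_{\underline v}(\norm,\abar),\iota^1_{\underline v}(\norm',\abar))$ directly, avoiding the quasi-triangle-inequality loss of $\log N$ appearing twice. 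Actually the cleanest route is: use that $d_p$ of two norms each dominated by a common third is controlled, or simply invoke the pointwise inequalities to bound successive minima directly — let me phrase it via successive minima to be safe.

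The main obstacle, as I see it, is not any single inequality but making sure the constant comes out as exactly $d_\infty(\norm,\norm') + \log\dim V$ and not something like $d_\infty(\norm,\norm') + 2\log\dim V$ or with an extra quasi-triangle defect $\log N$. The honest fix is to avoid the triangle inequality for $d_p$ on the non-Hermitian side entirely: one combines the two pointwise norm-domination facts (the $e^{\pm t}$ envelope bound and the structure from \cite[Lemma 2.8]{fin1}, which I'd want to extract in pointwise-comparison form rather than just as a $d_\infty$ statement) into a single pointwise comparison between $\iota_{\underline v}(\norm,\abar)$ and $\iota^1_{\underline v}(\norm',\abar)$, with multiplicative constant $e^{d_\infty(\norm,\norm')}\cdot(\dim V)^{\pm 1}$, and then pass from pointwise domination of norms to the successive-minima bound (which holds for \emph{any} $p$, since $\lambda_i$ is monotone under pointwise domination and the pointwise constants transfer directly). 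If \cite[Lemma 2.8]{fin1} is only available as a $d_\infty$-distance statement and not a pointwise one, then I would accept going through the triangle inequality and note that on $\cN^H(V)$ it is genuine, keeping in mind that $\iota^1_{\underline v}(\norm',\abar)$ need not be Hermitian; in that case the intermediate norm $\iota^1_{\underline v}(\norm,\abar)$ must itself be shown $d_\infty$-close to the Hermitian norm $\iota_{\underline v}(\norm,\abar)$, which is precisely \cite[Lemma 2.8]{fin1}, so the argument closes either way.
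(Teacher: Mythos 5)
Your argument is correct, but it distributes the work differently from the paper. The paper's proof is a direct quotation: it introduces the ray $t\mapsto\norm_t$ with $\norm[v_i]_t=\norm[v_i]e^{-ta_i}$, invokes \cite[Lemma 2.8]{fin1} at $t=1$ to get the two-norm $d_\infty$ estimate $d_\infty(\iota_{\underline v}(\norm,\abar),\iota^1_{\underline v}(\norm',\abar))\le d_\infty(\norm,\norm')+\log\dim V$ in one stroke, and concludes with $d_p\le d_\infty$. You instead use only the diagonal case of Finski's lemma (comparing $\iota_{\underline v}(\norm,\abar)$ with $\iota^1_{\underline v}(\norm,\abar)$, both built from the same Hermitian norm in $\cA_{\underline v}$) and supply the change of norm argument yourself, via the observation that the envelope $\norm'\mapsto\iota^1_{\underline v}(\norm',\abar)$ is $1$-Lipschitz for $d_\infty$ -- which is immediate from the infimum-over-decompositions formula together with the fact that $d_\infty$ is equivalent to two-sided pointwise domination. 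Your handling of the concatenation is also right: since $d_p$ is only a quasi-metric on $\cN(V)$, splitting as $d_p(A,C)\le d_p(A,B)+d_\infty(B,C)$ (pointwise domination shifts each successive minimum by at most $d_\infty(B,C)$, then Minkowski) avoids any spurious $\log N$ defect, and the constant comes out exactly as claimed. The trade-off: your route is more self-contained and makes explicit a Lipschitz property of $\iota^1$ that the paper never states, at the cost of being longer; the paper's route is a one-liner but leans entirely on the precise two-norm form of the cited lemma. The only dependency in your argument is that \cite[Lemma 2.8]{fin1} does yield the diagonal $d_\infty$ bound by $\log\dim V$, which you correctly flag; under either reading of the citation the proof closes.
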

\begin{proof}
For $t\in[0,\infty)$, let $\norm_t\in\cA_\vbar(V)$ be given by
$$\norm[v_i]_t:=\norm[v_i]e^{-ta_i}.$$
In particular, $\norm_1=\iota_\vbar(\norm,\abar)$. Applying \cite[Lemma 2.8]{fin1} at $t=1$ then gives
$$d_\infty(\iota_{\underline v}(\norm,\abar),\iota_{\underline v}^1(\norm',\abar))\leq d_\infty(\norm,\norm')+\log\dim V,$$
and the result follows since $d_p\leq d_\infty$.
\end{proof}

\subsection{Rescaling by filtrations, II: sequences of norms.}

Consider now $\phi$ a continuous positive metric on an ample line bundle $L$ over $X$. Let $\norm_\bullet\NA$ be a bounded graded norm on $L$. Then, it is well-known that for each $k$, one can find a basis $\underline s_k:=(s_{i,k})_i$ of $V_k=H^0(X,kL)$ which is both orthogonal in the Hermitian sense for $\norm_{k\phi}$ and in the non-Archimedean sense for $\norm_k\NA$. Let $\mff$ be a bounded, bounded right-derivative, decreasing convex function on the support of the Duistermaat--Heckman measure $\sigma(\norm_\bullet\NA)$, which gives a bounded graded norm $\norm_{\bullet,\mathfrak{f}}\NA$ as in Theorem \ref{thm_submult}. We then define the two following modifications:
\begin{enumerate}
    \item $\iota(\norm_{\bullet\phi},\mathfrak{f})$ is the sequence of Hermitian norms given in degree $k$ by
    $$\iota(\norm_{k\phi},\mathfrak{f}):=\iota_{\underline s_k}(\norm_{k\phi},\norm_{k,\mathfrak{f}}\NA),$$
    which can be described as the unique Hermitian norm on $V_k$ admitting $\underline s_k$ as an orthogonal basis and such that for all $i$,
    \begin{equation}\label{eq:normdesc}
    \iota(\norm_{k\phi},\mathfrak{f})(s_{i,k})=\norm[s_{i,k}]_{k\phi}e^{k\mff(-k\mi\log\norm[s_{i,k}]_k\NA)};
    \end{equation}
    \item $\iota^1(\norm_{\bullet\phi,\infty},\mathfrak{f})$ is the sequence of norms given in degree $k$ by
    $$\iota^1(\norm_{k\phi,\infty},\mathfrak{f}):=\iota^1(\norm_{k\phi,\infty},\norm_{k,\mathfrak{f}}\NA).$$
\end{enumerate}

\begin{theorem}\label{thm_bm}
    In the notations above,
    \begin{enumerate}
        \item $\iota^1(\norm_{\bullet\phi,\infty},\mathfrak{f})$ is bounded and submultiplicative;
        \item $\iota(\norm_{\bullet\phi},\mathfrak{f})$ is Bernstein--Markov with respect to $\iota^1(\norm_{\bullet\phi,\infty},\mathfrak{f})$.
    \end{enumerate}
\end{theorem}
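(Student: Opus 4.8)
The plan is to prove the two parts separately: part (1) by unwinding the envelope $\iota^1$ and checking that it respects submultiplicativity and boundedness, and part (2) as essentially a one-line consequence of Lemma \ref{lem_dpcomp} once one observes that the two modifications are governed by the \emph{same} rescaling data.

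For part (1), I would first record three elementary properties of the map $\norm\NA\mapsto\iota^1(\norm,\norm\NA)$, where $\iota^1(\norm,\norm\NA)(w)=\inf_{w=\sum w_i}\sum_i\norm[w_i]\norm[w_i]\NA$: it is monotone and positively homogeneous in the non-Archimedean argument, and it fixes $\norm$ when fed the trivial norm. Submultiplicativity of $\iota^1(\norm_{\bullet\phi,\infty},\mathfrak{f})$ then follows by a product-of-decompositions argument: given $s_m\in H^0(X,mL)$, $s_n\in H^0(X,nL)$ and decompositions $s_m=\sum_i u_i$, $s_n=\sum_j v_j$, the family $(u_iv_j)$ decomposes $s_ms_n$, and submultiplicativity of the sup-norms together with that of $\norm\NA_{\bullet,\mathfrak{f}}$ (Theorem \ref{thm_submult}) bounds $\iota^1(\norm_{(m+n)\phi,\infty},\mathfrak{f})(s_ms_n)$ by $\sum_{i,j}\norm[u_i]_{m\phi,\infty}\norm[v_j]_{n\phi,\infty}\norm[u_i]\NA_{m,\mathfrak{f}}\norm[v_j]\NA_{n,\mathfrak{f}}$, which factors over $i$ and $j$; taking infima over the two decompositions separately gives the claim. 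For boundedness, Theorem \ref{thm_submult} provides $\delta'>0$ with $e^{-\delta'k}\norm_{0,k}\leq\norm\NA_{k,\mathfrak{f}}\leq e^{\delta'k}\norm_{0,k}$, so monotonicity and homogeneity give $e^{-\delta'k}\norm_{k\phi,\infty}\leq\iota^1(\norm_{k\phi,\infty},\mathfrak{f})\leq e^{\delta'k}\norm_{k\phi,\infty}$; since $\phi$ is continuous, $\norm_{\bullet\phi,\infty}$ is already a bounded graded norm, hence so is $\iota^1(\norm_{\bullet\phi,\infty},\mathfrak{f})$.

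For part (2), the point to extract is that the common orthogonal basis $\underline s_k$ is, by the construction underlying Theorem \ref{thm_submult}, orthogonal for $\norm\NA_{k,\mathfrak{f}}$ as well as for $\norm_{k\phi}$ and $\norm_k\NA$. Setting $\abar_k:=(-\log\norm[s_{i,k}]\NA_{k,\mathfrak{f}})_i\in\bbr^{N_k}$, the identities $\iota^1(\norm,\norm\NA)=\iota_{\underline s_k}(\norm,\abar)$ (valid when $\underline s_k$ is a common orthogonal basis) and $\iota^1(\norm,\norm\NA_{\underline s_k,\abar})=\iota^1_{\underline s_k}(\norm,\abar)$ from the previous subsection give $\iota(\norm_{k\phi},\mathfrak{f})=\iota_{\underline s_k}(\norm_{k\phi},\abar_k)$ and, since the gauge norm $\norm\NA_{\underline s_k,\abar_k}$ coincides with $\norm\NA_{k,\mathfrak{f}}$ (both are non-Archimedean with orthogonal basis $\underline s_k$ and the same values there), also $\iota^1(\norm_{k\phi,\infty},\mathfrak{f})=\iota^1_{\underline s_k}(\norm_{k\phi,\infty},\abar_k)$ — the \emph{same} vector $\abar_k$ on both sides. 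Lemma \ref{lem_dpcomp} with $p=\infty$, $\norm=\norm_{k\phi}\in\cA_{\underline s_k}$, $\norm'=\norm_{k\phi,\infty}$ and $\abar=\abar_k$ then yields
$$d_\infty\big(\iota(\norm_{k\phi},\mathfrak{f}),\iota^1(\norm_{k\phi,\infty},\mathfrak{f})\big)\leq d_\infty(\norm_{k\phi},\norm_{k\phi,\infty})+\log N_k.$$
Since the $L^2$-norms are Bernstein--Markov with respect to the sup-norms (\cite[Lemma 3.2]{bb}), for each $\varepsilon>0$ the first term on the right is at most $\varepsilon k+\log C_\varepsilon$, and $\log N_k=O(\log k)$ as $N_k=O(k^n)$; so the right-hand side is at most $\varepsilon k+\log C'_\varepsilon$ for a suitable constant $C'_\varepsilon$ and every $\varepsilon>0$, which is exactly the asserted Bernstein--Markov property.

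The main obstacle is the bookkeeping in part (2): one must verify that the envelope defining $\iota^1(\norm_{\bullet\phi,\infty},\mathfrak{f})$ uses precisely the non-Archimedean norm $\norm\NA_{\bullet,\mathfrak{f}}$, and that this agrees with the gauge norm built from the rescaling vector $\abar_k$ used on the Hermitian side, so that a single application of Lemma \ref{lem_dpcomp} closes the gap. This hinges entirely on choosing $\underline s_k$ to be a common orthogonal basis for all the norms in play, which is part of the setup; once that is in place the estimate is immediate, and part (1) is soft.
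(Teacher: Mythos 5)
Your proposal is correct and follows essentially the same route as the paper: part (1) is proved by the identical product-of-decompositions argument using submultiplicativity of the sup-norms and of $\norm\NA_{\bullet,\mathfrak{f}}$ (Theorem \ref{thm_submult}), and part (2) is exactly the paper's one-line deduction from Lemma \ref{lem_dpcomp} combined with the Bernstein--Markov property of $L^2$-norms versus sup-norms. Your write-up merely supplies more detail than the paper does — notably the verification that the same rescaling vector $\abar_k$ governs both modifications and the absorption of $\log N_k=O(\log k)$ — which is a useful but not substantively different elaboration.
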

\begin{proof}
    \textit{Proof of (1).} The submultiplicativity statement appears in the second paragraph \cite[Section 5.1]{fin1}. We give here a proof for the convenience of the reader. 
    Let $s_m\in V_m$ and $s_n\in V_n$. We have by definition
    \begin{align*}
    \iota^1(\norm_{m+n},\norm_{m+n}\NA)(s_m\cdot s_n)=\inf_{s_m\cdot s_n=\sum t_p}\norm[t_p]_{m+n}\norm[t_p]_{m+n}\NA.
    \end{align*}
    Now, any decomposition $s_m=\sum u_q\in V_m$, $s_n= \sum v_r\in V_n$ gives a decomposition $s_m\cdot s_n=\sum u_q\cdot v_r\in V_{m+n}$. This implies that
    \begin{align*}
    \iota^1(\norm_{m+n},\norm_{m+n}\NA)(s_m\cdot s_n)&\leq \inf_{s_m=\sum u_q,s_n=\sum v_r}\norm[u_q\cdot v_r]_{m+n}\norm[u_q\cdot v_r]_{m+n}\NA\\
    &\leq \inf_{s_m=\sum u_q,s_n=\sum v_r}\norm[u_q]_m\norm[v_r]_n\norm[u_q]_m\NA\norm[v_r]_n\NA\\
    &=\inf_{s_m=\sum u_q}\left(\norm[u_q]_m\norm[u_q]_m\NA\right)\inf_{s_n=\sum v_r}\left(\norm[v_r]_n\norm[v_r]_n\NA\right)\\
    &=\iota^1(\norm_{m},\norm_{m}\NA)(s_m)\cdot\iota^1(\norm_{n},\norm_{n}\NA)(s_n),
    \end{align*}
    where in the second inequality we have used submultiplicativity of $\norm_\bullet$ and $\norm'_\bullet$. Boundedness easily follows from the fact that $\mathfrak{f}$ satisfies uniform boundedness.

    \bigskip\noindent\textit{Proof of (2).} Since $\norm_{\bullet \phi}$ is Bernstein--Markov with respect to $\norm_{\bullet \phi,\infty}$, the result then follows from applying Lemma \ref{lem_dpcomp}.
    \end{proof}

\begin{corollary}\label{coro_welldef}
    The limit $\FS(\iota(\norm_{\bullet\phi},\mathfrak{f}))=\FS(\iota^1(\norm_{\bullet\phi,\infty},\mathfrak{f}))$ is well-defined as a bounded positive metric on $L$.
\end{corollary}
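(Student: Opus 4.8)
The plan is to deduce the statement immediately from Theorem~\ref{thm_bm} together with Proposition~\ref{prop_convergence}. First I would record that $\iota^1(\norm_{\bullet\phi,\infty},\mathfrak{f})$ is a \emph{bounded graded norm} in the sense of Section~1.3: by Theorem~\ref{thm_bm}(1) it is submultiplicative, and its "boundedness" is precisely the statement that $d_\infty(\iota^1(\norm_{\bullet\phi,\infty},\mathfrak{f}),\norm_{\bullet\phi,\infty})<\infty$. Indeed the defining rescaling factors $e^{k\mathfrak{f}(\cdot)}$ are uniformly bounded by $e^{\pm k\sup|\mathfrak{f}|}$, whence $d_\infty(\iota^1(\norm_{k\phi,\infty},\mathfrak{f}),\norm_{k\phi,\infty})\le k\sup|\mathfrak{f}|$, and $\norm_{\bullet\phi,\infty}$ is the sup-norm sequence of the bounded positive metric $\phi$; so $\iota^1(\norm_{\bullet\phi,\infty},\mathfrak{f})$ satisfies the defining boundedness condition via the metric $\phi$ itself.

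Next I would apply Proposition~\ref{prop_convergence} with $\norm_\bullet:=\iota^1(\norm_{\bullet\phi,\infty},\mathfrak{f})$ and $\norm'_\bullet:=\iota(\norm_{\bullet\phi},\mathfrak{f})$: the former is a bounded graded norm by the previous paragraph, and by Theorem~\ref{thm_bm}(2) the latter is Bernstein--Markov with respect to it. Proposition~\ref{prop_convergence} then yields at once that $\FS(\iota^1(\norm_{\bullet\phi,\infty},\mathfrak{f}))$ exists as a bounded positive metric on $L$, and that it coincides with $\FS(\iota(\norm_{\bullet\phi},\mathfrak{f}))$, the latter limit existing as part of the conclusion (the sequences $k\mi\FS(\norm_k)$ and $k\mi\FS(\norm'_k)$ being uniformly asymptotically equal by the $1$-Lipschitz property of $\norm\mapsto k\mi\FS(\norm)$ and $k\mi d_\infty(\norm_k,\norm'_k)\to 0$). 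This is exactly the assertion of the corollary.

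There is essentially no analytic obstacle here: the corollary is a formal consequence of the two preceding results, and the only points requiring (minor) care are bookkeeping ones. One is the identification above of "bounded" in Theorem~\ref{thm_bm}(1) with the bounded-graded-norm condition. The other, if one wishes "well-defined" to also record independence of the auxiliary data, is to observe that both modifications are in fact intrinsic: $\iota^1(\norm_{k\phi,\infty},\mathfrak{f})$ is defined by an infimum over decompositions referring only to $\norm_{k\phi,\infty}$ and $\norm_{k,\mathfrak{f}}\NA$, while for $\iota(\norm_{k\phi},\mathfrak{f})$ the rescaling factor $e^{k\mathfrak{f}(-k\mi\log\norm[s_{i,k}]\NA_k)}$ depends only on which step of the filtration associated to $\norm_k\NA$ the vector $s_{i,k}$ lies in, hence is constant on the $\norm_k\NA$-isospectral subspaces; since these subspaces together with the orthogonal splitting of $\norm_{k\phi}$ they induce are intrinsic, the resulting Hermitian norm does not depend on the chosen jointly orthogonal basis $\underline s_k$.
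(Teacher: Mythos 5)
Your proof is correct and is exactly the paper's argument: the corollary is deduced by combining Theorem~\ref{thm_bm} (submultiplicativity/boundedness of $\iota^1(\norm_{\bullet\phi,\infty},\mathfrak{f})$ and the Bernstein--Markov property of $\iota(\norm_{\bullet\phi},\mathfrak{f})$ relative to it) with Proposition~\ref{prop_convergence}. The extra bookkeeping you supply (identifying ``bounded'' with the bounded-graded-norm condition, and basis-independence) is a reasonable elaboration of details the paper leaves implicit.
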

\begin{proof}
    This follows from Proposition \ref{prop_convergence}.
\end{proof}

\begin{remark}
    It is then clear from the definitions that $\FS(\iota(\norm_{k\phi},\mathfrak{f}))$ corresponds with the metric $\phi_k^\mff$ from the introduction.
\end{remark}

\subsection{Proof of the main result.}

Given $\norm_\bullet\NA$ a bounded graded norm, let $\cC_{\norm_\bullet\NA}$ be the infinite-dimensional cone of bounded decreasing convex functions on the support of the Duistermaat--Heckman measure of $\norm_\bullet\NA$.

\begin{theorem}\label{thm_flat1}For all continuous positive metrics $\phi$ on $L$ and all bounded graded norms $\norm_\bullet\NA$ on $L$, there exists an embedding
\begin{align*}
\iota:(\cC_{\norm_\bullet\NA},L^p(\sigma(\norm_\bullet\NA)))&\to (L^\infty\cap\PSH(X,L),d_p)\\
\mff&\mapsto \FS(\iota(\norm_{\bullet\phi},\mathfrak{f})),
\end{align*}
mapping the apex $\mff\equiv 0$ to $\phi$, which is isometric for all $p\in[1,\infty)$.
\end{theorem}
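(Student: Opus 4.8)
The plan is to transport the statement, via the quantisation dictionary of Sections~1 and~2, into a single apartment of Hermitian norms, where it reduces to an elementary computation with the Duistermaat--Heckman measure. Throughout, fix for each $k$ a basis $\underline s_k=(s_{i,k})_i$ of $V_k=H^0(X,kL)$ that is simultaneously orthogonal for $\norm_{k\phi}$ (in the Hermitian sense) and for $\norm_k\NA$ (in the non-Archimedean sense), and set $\mu_{i,k}:=-k\mi\log\norm[s_{i,k}]_k\NA$, so that the $\mu_{i,k}$ are precisely the atoms of $\sigma_k(\norm_k\NA)$ and lie in the support interval $I$ of $\sigma(\norm_\bullet\NA)$. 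First I would deal with well-definedness and the apex. For $\mff\in\cC_{\norm_\bullet\NA}$, Theorem~\ref{thm_bm}(1) gives that $\iota^1(\norm_{\bullet\phi,\infty},\mff)$ is submultiplicative and bounded, and since $\norm_{k,\mathfrak{f}}\NA$ is bounded with constant $\sup\mff$ this sequence stays at finite $d_\infty$-distance from $\norm_{\bullet\phi,\infty}$, hence is a bounded graded norm; by Corollary~\ref{coro_welldef} its Fubini--Study metric is a well-defined bounded positive metric equal to $\FS(\iota(\norm_{\bullet\phi},\mff))$, so $\iota$ indeed lands in $L^\infty\cap\PSH(X,L)$. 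For $\mff\equiv 0$, the apex of the cone, no rescaling occurs, $\iota(\norm_{k\phi},0)=\norm_{k\phi}$, and classical quantisation gives $\FS(\norm_{\bullet\phi})=\phi$.

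Next, fix $\mff,\mfg\in\cC_{\norm_\bullet\NA}$. Since $\iota^1(\norm_{\bullet\phi,\infty},\mff)$ and $\iota^1(\norm_{\bullet\phi,\infty},\mfg)$ are bounded graded norms, Theorem~\ref{thm_dpquant} together with Corollary~\ref{coro_welldef} yields
\[
d_p\big(\FS(\iota(\norm_{\bullet\phi},\mff)),\FS(\iota(\norm_{\bullet\phi},\mfg))\big)=d_p\big(\iota^1(\norm_{\bullet\phi,\infty},\mff),\iota^1(\norm_{\bullet\phi,\infty},\mfg)\big).
\]
I would then replace the envelope norms on the right by the Hermitian rescalings $\iota(\norm_{k\phi},\mff),\iota(\norm_{k\phi},\mfg)\in\cA_{\underline s_k}$, which are explicitly computable. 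Applying Lemma~\ref{lem_dpcomp} in degree $k$, with $\norm=\norm_{k\phi}$, $\norm'=\norm_{k\phi,\infty}$ and the rescaling vector read off from $\norm_{k,\mathfrak{f}}\NA$ in the basis $\underline s_k$, gives $d_p\big(\iota(\norm_{k\phi},\mff),\iota^1(\norm_{k\phi,\infty},\mff)\big)\le d_\infty(\norm_{k\phi},\norm_{k\phi,\infty})+\log N_k$; since $\norm_{\bullet\phi}$ is Bernstein--Markov with respect to $\norm_{\bullet\phi,\infty}$ and $\log N_k=o(k)$, both terms are $o(k)$, so $d_p\big(\iota(\norm_{\bullet\phi},\mff),\iota^1(\norm_{\bullet\phi,\infty},\mff)\big)=0$, and likewise for $\mfg$. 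Because the quasi-triangle defect $k\mi\log N_k$ vanishes in the limit, $d_p$ on sequences satisfies the genuine triangle inequality, so the right-hand side above equals $d_p\big(\iota(\norm_{\bullet\phi},\mff),\iota(\norm_{\bullet\phi},\mfg)\big)$.

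It remains to carry out the apartment computation and pass to the limit. Both $\iota(\norm_{k\phi},\mff)$ and $\iota(\norm_{k\phi},\mfg)$ are diagonal in $\underline s_k$, and in $\iota(\norm_{k\phi},\cdot)(s_{i,k})$ every factor other than $e^{k\mff(\mu_{i,k})}$, resp.\ $e^{k\mfg(\mu_{i,k})}$, is the same, so
\[
\lambda_i\big(\iota(\norm_{k\phi},\mff),\iota(\norm_{k\phi},\mfg)\big)=\log\frac{\iota(\norm_{k\phi},\mfg)(s_{i,k})}{\iota(\norm_{k\phi},\mff)(s_{i,k})}=k\,(\mfg-\mff)(\mu_{i,k}),
\]
whence
\[
k\mi d_p\big(\iota(\norm_{k\phi},\mff),\iota(\norm_{k\phi},\mfg)\big)=\Big(N_k\mi\textstyle\sum_i|(\mfg-\mff)(\mu_{i,k})|^p\Big)^{1/p}=\Big(\tfrac{k^{\dim X}}{N_k}\int_I|\mfg-\mff|^p\,d\sigma_k(\norm_k\NA)\Big)^{1/p}.
\]
Taking $\limsup_k$ and using the weak convergence $\sigma_k(\norm_\bullet\NA)\to\sigma(\norm_\bullet\NA)$ (together with $N_k/k^{\dim X}\to\sigma(\norm_\bullet\NA)(I)$) and the continuity of $\mfg-\mff$ on the interior of $I$ identifies this limit with $\|\mfg-\mff\|_{L^p(\sigma(\norm_\bullet\NA))}$, the normalisations of the Monge--Ampère and Duistermaat--Heckman measures being fixed compatibly at the outset. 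Chaining the three displays proves the asserted isometry.

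The genuinely non-formal point is this last limit passage: a priori $|\mfg-\mff|^p$ is only continuous on the \emph{interior} of the support interval $I$, so weak convergence does not apply verbatim. One resolves this by controlling the mass the $\sigma_k$ place near the two endpoints of $I$ — for instance by squeezing $|\mfg-\mff|^p$ between continuous functions, by using that $\sigma(\norm_\bullet\NA)$ carries no atom at the endpoints, or by replacing $\mff,\mfg$ with their continuous (lower semicontinuous) convex representatives, which changes neither the asymptotics of the rescaled norms nor the $L^p(\sigma(\norm_\bullet\NA))$-norm. Everything else is assembly of Theorems~\ref{thm_dpquant} and~\ref{thm_bm}, Lemma~\ref{lem_dpcomp}, and bookkeeping of the Duistermaat--Heckman measure; strict convexity of $L^p$ for $p>1$, already noted, gives that the image is a genuine flat cone.
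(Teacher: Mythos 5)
Your proof is correct and follows essentially the same route as the paper: reduce $d_p$ of the Fubini--Study metrics to $d_p$ of the norm sequences via Theorem \ref{thm_dpquant}, compute the successive minima explicitly in the common orthogonal bases $\underline s_k$, and pass to the limit using weak convergence of $\sigma_k(\norm_k\NA)$. In fact you are somewhat more careful than the paper on two points it glosses over: you justify the applicability of Theorem \ref{thm_dpquant} (stated for submultiplicative bounded graded norms) by routing through $\iota^1(\norm_{\bullet\phi,\infty},\cdot)$ and Lemma \ref{lem_dpcomp} before passing to the diagonal Hermitian rescalings, and you flag both the possible discontinuity of $|\mff-\mfg|^p$ at the endpoints of the support in the weak-convergence step and the normalisation mismatch between $N_k\mi$ and $k^{-\dim X}$ in the definition of $\sigma_k$.
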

\begin{proof}
Let us denote for simplicity by $S=[a,b]$ the support of the Duistermaat--Heckman measure of $\norm_\bullet\NA$, which is a closed interval by \cite[Corollary 5.4]{bhj}.
 Let $\mff,\mfg\in \cC_{\norm_\bullet\NA}$, which we first assume to have finite right derivative at $a$.
By Corollary \ref{coro_welldef} and Theorem \ref{thm_dpquant}, for $p\in[1,\infty)$, one has
\begin{equation}
d_p(\FS(\iota(\norm_{\bullet\phi},\mff)),\FS(\iota(\norm_{\bullet\phi},\mfg)))=d_p(\iota(\norm_{\bullet\phi},\mff),\iota(\norm_{\bullet\phi},\mfg)).
\end{equation}
To prove the isometry result, it therefore suffices to show that we have
\begin{equation}\label{eq_dplebesgue}d_p(\iota(\norm_{\bullet\phi},\mff),\iota(\norm_{\bullet\phi},\mfg))=\|\mff-\mfg\|_{L^p(\sigma(\norm_\bullet\NA))}.
\end{equation}
By definition, one has that
\begin{align*}k\mi d_p(\iota(\norm_{k\phi},\mff),\iota(\norm_{k\phi},\mfg))^p&=N_k\mi\sum_{i=1,\dots,N_k}|\mff(-k\mi\log\norm[s_{i,k}]\NA_k)-\mfg(-k\mi\log\norm[s_{i,k}]\NA_k)|^p\\
&=\int_S |\mff-\mfg|^p\,d\sigma_k(\norm\NA_k).
\end{align*}
Indeed, both norms in the left-hand side are diagonalisable in the basis $\underline s_k$, so that their successive minima are
$$\lambda_i(\iota(\norm_{k\phi},\mff),\iota(\norm_{k\phi},\mfg))=\log \frac{\norm[s_{i,k}]_{k\phi}e^{kg(-k\mi\log\norm[s_{i,k}]\NA_k)}}{\norm[s_{k,i}]_{k\phi}e^{kf(-k\mi\log\norm[s_{i,k}]\NA_k)}}.$$
It then easily follows, since $\sigma_k\to \sigma$ weakly, that one obtains in the limit the desired isometry equality \eqref{eq_dplebesgue}. 

Now, we may extend the map $\iota$ isometrically to general functions in $\cC_{\norm_\bullet\NA}$. Let $\mff$ be such a function, which we approximate in $L^p$ norm by a sequence of convex, decreasing, bounded functions $\mff_k$ with bounded right derivative at $a$ (this is feasible \textit{a fortiori} in $L^\infty$ norm by cutting off $\mff$ at $a+\varepsilon$ and extending it affinely over $[a,a+\varepsilon]$ by the segment joining $\mff(a)$ and $\mff(a+\varepsilon)$). Then this sequence is Cauchy, and by the first part of the proof,
$$d_p(\FS(\iota(\norm_{\bullet\phi},\mff_m)),\FS(\iota(\norm_{\bullet\phi},\mff_n)))=\|\mff_m-\mff_n\|_{L^p(\sigma(\norm_\bullet\NA))}.$$
In particular, the sequence $(\FS(\iota(\norm_{\bullet\phi},\mff_k)))_k$ is $d_p$-Cauchy, hence converges to some metric in $\cE^p(X,L)$  by \cite[Theorem 4.17]{darmabuchi}, which we denote for simplicity by $\FS(\iota(\norm_{\bullet\phi},\mff)\in \cE^p(X,L)$. It is easy to see that this is independent of the choice of an approximation. 

Since $\iota$ is order-reversing in $\mff$ (using \eqref{eq:normdesc} and \eqref{eq:fs}), and our approximation procedure described above gives a decreasing sequence $(\mff_k)_k$, it further follows that this choice of a sequence gives an approximation of $\FS(\iota(\norm_{\bullet\phi},\mff)$ from below, hence $\FS(\iota(\norm_{\bullet\phi},\mff)\in L^\infty\cap\PSH(X,L)$. 

Finally, picking now $\mff,\mfg\in \cC_{\norm_\bullet\NA}$ with bounded right-derivative approximations $(\mff_k)_k,(\mfg_k)_k$, we have that
\begin{align*}
	d_p(\FS(\iota(\norm_{\bullet\phi},\mff)),\FS(\iota(\norm_{\bullet\phi},\mfg)))&=\lim_{k\to\infty}d_p(\FS(\iota(\norm_{\bullet\phi},\mff_k)),\FS(\iota(\norm_{\bullet\phi},\mfg_k)))\\
	&=\lim_{k\to\infty}\|\mff_k-\mfg_k\|_{L^p(\sigma(\norm_\bullet\NA))}\\
	&=\|\mff-\mfg\|_{L^p(\sigma(\norm_\bullet\NA))},
\end{align*}
proving the result.

Note that the map in the statement of the theorem is then easily seen to be injective, i.e.\ defines an isometric \textit{embedding}, for if $\mff\neq \mfg$ while their images define the same metric, then
$$\|\mff-\mfg\|_{L^p(\sigma(\norm_\bullet\NA))}=0.$$
\end{proof}

\subsection{Discussion and open questions.}\,

\bsni\textbf{Relation with Okounkov bodies.} The Okounkov body $\Delta(X,L)$ \cite{okounkov,lazmus,kkh} is a convex body in $\bbr^n$ associated to the algebra of sections of $L$, which captures the volume of $L$. Its definition requires a flag, in particular a hypersurface $Y_1\subset X$. Such a hypersurface induces a bounded graded filtration $\cF_1$ (\textit{via} its order of vanishing valuation). In \cite{dwntransforming}, the second author defines an operation sending a bounded positive metric $\phi$ on $L$ to a convex function on $\Delta(X,L)$, its \textit{Chebyshev transform} $c[\phi]$. By our main result, one can show that a decreasing convex function $\mff$ on the support of the measure associated to $\cF_1$ induces a modification $\phi^\mff$ satisfying $c[\phi^\mff]=c[\phi]+\mff$. In particular, geodesics in this cone are characterised by their Chebyshev transform being affine.

In \cite[Theorem 8.1]{rwn2}, the authors show that there exists a partial moment map from the space of positive metrics to the first coordinate of the Okounkov body. Namely, if $H_1$ is the \textit{exhaustion function} in the sense of \cite[Section 8]{rwn2} associated to the first divisor $Y_1$ in the flag used to construct the Okounkov body, then for all continuous positive metrics on $L$, one has
$$(H_1)_*(dd^c\phi)^n = (p_1)_*d\lambda,$$
where $d\lambda$ is the Lebesgue measure, and $p_1$ is the projection onto the first variable. Exhaustion functions can be constructed using Legendre transforms of test curves; such test curves also admit Chebyshev transforms which give functions in $\cC$ under certain singularity assumptions. Finally, $H_1$ can also be obtained \textit{via} a related, but different, quantisation procedure to ours \cite[Theorem 8.3]{rwn2}. It would therefore be interesting to understand better the connection between our results and those of \cite{rwn2}.

\bsni\textbf{The building structure on the space of Kähler metrics.} The apartments we defined in the space of Hermitian norms on a vector space give it the structure of an \textit{Euclidean building}, with the key property that \textit{any two norms share an apartment}: in other words, any two Hermitian norms can be diagonalised by the same basis, and they (and the geodesic joining them) lie in an entire flat isometric to $\bbr^n$. Our results hint at the existence of such an apartment structure in the space of positive metrics, where the apartments are given by sequences of bases diagonalising a filtration, and the flat model is the space bounded decreasing convex functions on the support of the associated Duistermaat--Heckman measure. This suggests the following, seemingly very difficult open queestion:

\bsni\textbf{Question.} Let $\phi_0,\phi_1$ be two continuous positive metrics on $L$. Then, there exists a choice of filtration such that they lie in the image of a same embedding in the sense of our main theorem.

\bigskip This result would then completely realise the space of positive metrics as an \textit{asymptotic} building, suggesting that the results and techniques from the theory of Euclidean buildings can also be applied to it.

\bsni\textbf{Generalisations.} Much recent work has been devoted to extending pluripotential theory and quantisation procedures both to big or prescribed singularity classes, and to the Kähler setting \cite{darvasxia,ddnl23,drwnxz}. Notions of test configurations (hence possibly filtrations) for Kähler manifolds, more or less algebraic, exist in the literature \cite{dervanross,zak,dxz}. It is expected that an adequate formulation of our results also hold for big or prescribed singularity classes, although many necessary tools, such as Finski's results and a satisfactory theory of Duistermaat--Heckman measures, are as of yet missing.

\bsni\textbf{Toric degenerations.} In the toric setting, one can pick a filtration $\cF$ that is not only submultiplicative but \textit{multiplicative}. Thus, the proof of Theorem \ref{thm_submult} shows that one can embed the entire space of convex functions on the support of the Duistermaat--Heckman measure in the space of positive metrics; not just the decreasing ones. Assuming that $(X,L)$ is not toric, but admits a (metrised) toric degeneration, it would be interesting to understand how the phenomenon of obtaining the entire cone of convex functions arises at the limit.

\bibliographystyle{alpha}
\bibliography{bib}

\newcommand{\etalchar}[1]{$^{#1}$}
\begin{thebibliography}{DRN{\etalchar{+}}23}

\bibitem[BB10]{bb}
Robert Berman and S{\'e}bastien Boucksom.
\newblock Growth of balls of holomorphic sections and energy at equilibrium.
\newblock {\em Invent. Math.}, 181(2):337--394, 2010.

\bibitem[BC11]{bchen}
S{\'e}bastien Boucksom and Huayi Chen.
\newblock Okounkov bodies of filtered linear series.
\newblock {\em Compos. Math.}, 147(4):1205--1229, 2011.

\bibitem[BE21]{be}
S{\'e}bastien Boucksom and Dennis Eriksson.
\newblock Spaces of norms, determinant of cohomology and {F}ekete points in
  non-{A}rchimedean geometry.
\newblock {\em Adv. Math.}, 378:107501, 2021.

\bibitem[Ber09]{berndtssonprobability}
Bo~Berndtsson.
\newblock Probability measures related to geodesics in the space of
  {K}{\"a}hler metrics.
\newblock {\em arXiv preprint arXiv:0907.1806}, 2009.

\bibitem[BHJ17]{bhj}
S{\'e}bastien Boucksom, Tomoyuki Hisamoto, and Mattias Jonsson.
\newblock Uniform {K}-stability, {Duistermaat--Heckman} measures and
  singularities of pairs.
\newblock In {\em Ann. Inst. Fourier (Grenoble)}, volume~67, pages 743--841,
  2017.

\bibitem[BJ21]{bjkstab1}
S{\'e}bastien Boucksom and Mattias Jonsson.
\newblock A non-archimedean approach to {K}-stability, {I}: {M}etric geometry
  of spaces of test configurations and valuations.
\newblock {\em To appear in Ann. Inst. Fourier (Grenoble)}, 2021.

\bibitem[BJ23]{bjkstab2}
Sebastien Boucksom and Mattias Jonsson.
\newblock A non-archimedean approach to k-stability, ii: divisorial stability
  and openness.
\newblock {\em J. Reine Angew. Math.}, 2023.

\bibitem[Bou90]{bouche}
Thierry Bouche.
\newblock Convergence de la m\'{e}trique de {F}ubini-{S}tudy d'un fibr\'{e}
  lin\'{e}aire positif.
\newblock {\em Ann. Inst. Fourier (Grenoble)}, 40(1):117--130, 1990.

\bibitem[Cat99]{catlin}
David Catlin.
\newblock The {B}ergman kernel and a theorem of {T}ian.
\newblock In {\em Analysis and geometry in several complex variables ({K}atata,
  1997)}, Trends Math., pages 1--23. Birkh\"{a}user Boston, Boston, MA, 1999.

\bibitem[CC21]{chencheng}
Xiuxiong Chen and Jingrui Cheng.
\newblock On the constant scalar curvature {K}\"{a}hler metrics
  ({II})---{E}xistence results.
\newblock {\em J. Amer. Math. Soc.}, 34(4):937--1009, 2021.

\bibitem[Che00]{chen}
Xiuxiong Chen.
\newblock The space of {K}{\"a}hler metrics.
\newblock {\em J. Differential Geom.}, 56(2):189--234, 2000.

\bibitem[CM15]{cmac}
Huayi Chen and Catriona Maclean.
\newblock Distribution of logarithmic spectra of the equilibrium energy.
\newblock {\em Manuscripta Math.}, 146(3-4):365--394, 2015.

\bibitem[Cod19]{codogni}
Giulio Codogni.
\newblock Tits buildings and {$K$}-stability.
\newblock {\em Proc. Edinb. Math. Soc. (2)}, 62(3):799--815, 2019.

\bibitem[CS12]{chensun}
Xiuxiong Chen and Song Sun.
\newblock Space of {K}\"{a}hler metrics ({V})---{K}\"{a}hler quantization.
\newblock In {\em Metric and differential geometry}, volume 297 of {\em Progr.
  Math.}, pages 19--41. Birkh\"{a}user/Springer, Basel, 2012.

\bibitem[Dar15]{darmabuchi}
Tam{\'a}s Darvas.
\newblock The {M}abuchi geometry of finite energy classes.
\newblock {\em Adv. Math.}, 285:182--219, 2015.

\bibitem[DDNL23]{ddnl23}
Tam{\'a}s Darvas, Eleonora Di~Nezza, and Chinh~H Lu.
\newblock Relative pluripotential theory on compact {K}ähler manifolds.
\newblock {\em arXiv preprint arXiv:2303.11584}, 2023.

\bibitem[DLR20]{dlr}
Tam{\'a}s Darvas, Chinh~H Lu, and Yanir~A Rubinstein.
\newblock Quantization in geometric pluripotential theory.
\newblock {\em Communications on Pure and Applied Mathematics},
  73(5):1100--1138, 2020.

\bibitem[Don99]{donaldson}
S.~K. Donaldson.
\newblock Symmetric spaces, {K}\"{a}hler geometry and {H}amiltonian dynamics.
\newblock In {\em Northern {C}alifornia {S}ymplectic {G}eometry {S}eminar},
  volume 196 of {\em Amer. Math. Soc. Transl. Ser. 2}, pages 13--33. Amer.
  Math. Soc., Providence, RI, 1999.

\bibitem[DR17]{dervanross}
Ruadha\'{\i} Dervan and Julius Ross.
\newblock K-stability for {K}\"{a}hler manifolds.
\newblock {\em Math. Res. Lett.}, 24(3):689--739, 2017.

\bibitem[DRN{\etalchar{+}}23]{drwnxz}
Tam{\'a}s Darvas, R{\'e}mi Reboulet, David~Witt Nystr{\"o}m, Mingchen Xia, and
  Kewei Zhang.
\newblock Transcendental {O}kounkov bodies.
\newblock {\em arXiv preprint arXiv:2309.07584, to appear in J. Differential
  Geom.}, 2023.

\bibitem[DX24]{darvasxia}
Tam{\'a}s Darvas and Mingchen Xia.
\newblock The volume of pseudoeffective line bundles and partial equilibrium.
\newblock {\em Geom. Topol.}, 28(4):1957--1993, 2024.

\bibitem[DXZ25]{dxz}
Tam{\'a}s Darvas, Mingchen Xia, and Kewei Zhang.
\newblock A transcendental approach to non-archimedean metrics of
  pseudoeffective classes.
\newblock {\em Comment. Math. Helv.}, 2025.

\bibitem[Fin22]{fin1}
Siarhei Finski.
\newblock Submultiplicative norms and filtrations on section rings.
\newblock {\em arXiv preprint arXiv:2210.03039}, 2022.

\bibitem[Fin23]{fin2}
Siarhei Finski.
\newblock Geometry at the infinity of the space of positive metrics: test
  configurations, geodesic rays and chordal distances.
\newblock {\em arXiv preprint arXiv:2305.15300, to appear in J. Reine Angew.
  Math.}, 2023.

\bibitem[KK12]{kkh}
Kiumars Kaveh and Askold~G Khovanskii.
\newblock Newton-{O}kounkov bodies, semigroups of integral points, graded
  algebras and intersection theory.
\newblock {\em Ann. of Math.}, pages 925--978, 2012.

\bibitem[LM09]{lazmus}
Robert Lazarsfeld and Mircea Mustaț{\u{a}}.
\newblock Convex bodies associated to linear series.
\newblock In {\em Ann. Sci. ´Ecole Norm. Sup. (4)}, volume~42, pages 783--835,
  2009.

\bibitem[Mab87]{mabuchi}
Toshiki Mabuchi.
\newblock Some symplectic geometry on compact {K}{\"a}hler manifolds {(I)}.
\newblock {\em Osaka J. Math.}, 24(2):227--252, 1987.

\bibitem[Nys12]{dwnfiltr}
David~Witt Nystr{\"o}m.
\newblock Test configurations and okounkov bodies.
\newblock {\em Compos. Math.}, 148(6):1736--1756, 2012.

\bibitem[Nys14]{dwntransforming}
David~Witt Nystr{\"o}m.
\newblock Transforming metrics on a line bundle to the {O}kounkov body.
\newblock {\em Ann. Sci. ´Ecole Norm. Sup. (4)}, 47(6):1111--1161, 2014.

\bibitem[Oko03]{okounkov}
Andrei Okounkov.
\newblock Why would multiplicities be log-concave?
\newblock In {\em The orbit method in geometry and physics}, pages 329--347.
  Springer, 2003.

\bibitem[PS07]{phongsturm}
Duong~H. Phong and Jacob Sturm.
\newblock Test configurations for {K}-stability and geodesic rays.
\newblock {\em J. Symplectic Geom.}, 5(2):221--247, 2007.

\bibitem[Reb22]{reb2}
R\'{e}mi Reboulet.
\newblock Plurisubharmonic geodesics in spaces of non-{A}rchimedean metrics of
  finite energy.
\newblock {\em J. Reine Angew. Math.}, 793:59--103, 2022.

\bibitem[RN14]{rwn}
Julius Ross and David~Witt Nystr{\"o}m.
\newblock Analytic test configurations and geodesic rays.
\newblock {\em J. Symplectic Geom.}, 12(1):125--169, 2014.

\bibitem[RN17]{rwn2}
Julius Ross and David~Witt Nystr{\"o}m.
\newblock Envelopes of positive metrics with prescribed singularities.
\newblock {\em Ann. Fac. Sci. Toulouse, Math. (6)}, 26(3):687--727, 2017.

\bibitem[SD18]{zak}
Zakarias Sj{\"o}str{\"o}m~Dyrefelt.
\newblock K-semistability of {cscK} manifolds with transcendental cohomology
  class.
\newblock {\em J. Geom. Anal.}, 28(4):2927--2960, 2018.

\bibitem[Sem92]{semmes}
Stephen Semmes.
\newblock Complex {M}onge-{A}mp{\`e}re and symplectic manifolds.
\newblock {\em Amer. J. Math.}, pages 495--550, 1992.

\bibitem[Ste97]{book:probability}
J~Michael Steele.
\newblock {\em Probability theory and combinatorial optimization}.
\newblock SIAM, 1997.

\bibitem[Sz{\'e}15]{sze}
G{\'a}bor Sz{\'e}kelyhidi.
\newblock Filtrations and test-configurations.
\newblock {\em Math. Ann.}, 362(1):451--484, 2015.

\bibitem[Tia90]{tian}
Gang Tian.
\newblock On a set of polarized {K}\"{a}hler metrics on algebraic manifolds.
\newblock {\em J. Differential Geom.}, 32(1):99--130, 1990.

\bibitem[Zel98]{zelditch}
Steve Zelditch.
\newblock Szego kernels and a theorem of {T}ian.
\newblock {\em Int. Math. Res. Not.}, (6):317--331, 1998.

\end{thebibliography}

\end{document}